\newcounter{bullet}
\newtheorem{thm}{Theorem}[section]
\newtheorem{prop}[thm]{Proposition}
\newtheorem{cor}[thm]{Corollary}
\newtheorem{lem}[thm]{Lemma}
\newtheorem{conj}[thm]{Conjecture}
\theoremstyle{definition}
\newtheorem{mydef}[thm]{Definition}
\newtheorem{claim}[thm]{Claim}
\crefname{lem}{lemma}{lemmas}
\newcommand{\gO}{\Omega}
\newcommand{\EE}{\mathbb{E}}
\newcommand{\beq}[1]{\begin{equation}\label{#1}}
\newcommand{\enq}[0]{\end{equation}}
\newcommand{\ga}[0]{\alpha }
\newcommand{\gb}[0]{\beta }
\newcommand{\gc}[0]{\gamma }
\newcommand{\gd}[0]{\delta }
\newcommand{\gD}[0]{\Delta }
\newcommand{\vp}[0]{\varphi}
\newcommand{\bn}[0]{\bigskip\noindent}
\newcommand{\mn}[0]{\medskip\noindent}
\newcommand{\nin}[0]{\noindent}
\newcommand{\ov}[0]{\bar}
\newcommand{\sub}[0]{\subseteq}
\newcommand{\sm}[0]{\setminus}
\newcommand{\0}[0]{\emptyset}
\newcommand{\ra}[0]{\rightarrow}
\newcommand{\pr}[0]{\mathbb{P}}
\newcommand{\Aut}[0]{\mbox{Aut}}
\newcommand{\aut}[0]{\mbox{aut}}
\newcommand{\pE}[0]{p_{\mathbb E}}
\newcommand{\cop}[1]{\ov{#1}}
\newcommand{\C}[2]{\binom{{#1}}{{#2}}}
\newcommand{\T}[0]{{\mathcal T}}
\newcommand{\sss}[0]{{\mathcal S}}
\newcommand{\Tjs}[0]{T_j^*}
\newcommand{\Sjs}[0]{S_j^*}
\newcommand{\vjs}[0]{v_j^*}
\newcommand{\ejs}[0]{e_j^*}
\newcommand{\ST}[0]{R}
\newcommand{\CC}{C}
\newcommand{\rr}{r}
\newenvironment{subproof}[1][\proofname]{
  
  \begin{proof}[#1]
}{
  \end{proof}
}
\begin{document}

\title{On the ``second" Kahn--Kalai Conjecture}

\author[Q. Dubroff]{Quentin Dubroff}
\address{Department of Mathematics, Carnegie Mellon University}
\email{qdubroff@andrew.cmu.edu}

\author[J. Kahn]{Jeff Kahn}
\address{Department of Mathematics, Rutgers University}
\email{jkahn@math.rutgers.edu}

\author[J. Park]{Jinyoung Park}
\address{Department of Mathematics, Courant Institute of Mathematical Sciences, New York University}
\email{jinyoungpark@nyu.edu}

\begin{abstract}
We make progress on a conjecture of Kahn and Kalai, 
the original (stronger but less general) version of what became known as the ``Kahn-Kalai Conjecture" (KKC; now 
a theorem of Park and Pham). 
This ``second" KKC concerns the threshold, $p_c(H)$, for  
$G_{n,p}$ 
to contain a copy of a given graph $H$, predicting
$p_c(H) = O(\pE(H)\log n)$, where $\pE$ is an easy lower bound on $p_c$.
What we actually show is $\pE^*(H)=O(\pE(H)\log ^2n)$, where 
$\pE^*$,
the \emph{fractional expectation threshold}, 
is a larger lower bound suggested by Talagrand.  
When combined with Talagrand's fractional relaxation of the KKC 
(now a theorem of Frankston, Kahn, Narayanan and Park),
this gives $p_c(H)=O(\pE(H)\log^3 n)$.
(The second KKC would follow similarly if one could
remove the log factors from the above bound on $\pE^*$.)
\end{abstract}

\maketitle
%%%%%%%%%%%%%%%%%%%%%%%%%%%%%%%%%%%

\section{Introduction}\label{sec.intro}

For graphs $G$ and $H$, a \emph{copy} of $H$ in $G$ is an (unlabeled)
subgraph of $G$ isomorphic to $H$.
We use $N(G,H)$ for the number of such copies.
As usual, $G_{n,p}$ is the Erd\H{o}s-R\'enyi random graph; that is,
it has vertex set $[n]:=\{1,\ldots, n\}$, and contains each edge of
$K_n$ with probability $p$, independent of other choices.
(We will, a little abusively, use ``$G\supseteq H$"
to mean $G$ contains a copy of $H$.)
Additional notation is given at the end of this section.

Define the \textit{threshold for $H$-containment}, 
$p_c(H)$ ($= p_c(n,H)$, but we suppress the $n$),
to be the unique $p$ for which 
$\pr(G_{n,p}\supseteq H)=1/2$.
This is finer than the original notion
of Erd\H{o}s and R\'enyi \cite{erdos1960evolution}, 
which defines $p_0=p_0(n)$ to be 
\emph{a threshold function} for the property $\{G_{n,p}\supseteq H\}$ if 
\[
\pr(G_{n,p} \supseteq H) \rightarrow
\left\{\begin{array}{ll}
$0$&\mbox{if $p \ll p_0$}\\
$1$& \mbox{if $p \gg p_0$}
\end{array}\right\}
\,\,\mbox{as $n \rightarrow \infty$.}
\]
That $p_c(H)$ is always such a threshold function follows from \cite{bollobas1987threshold}.

For a graph $J$, set
$\EE_pX_J =\EE N(G_{n,p},J).$
In \cite{kahn2007thresholds}, 
Kalai and the second author considered
a naive lower bound on $p_c(H)$ that one might now
call the 
\textit{graphic expectation threshold for $H$}:
\[
\pE(H) =\pE(n,H)=\min\{p:\EE_pX_I \ge 1/2 ~ \forall I \sub H\}.
\]
(This is a lower bound on
$p_c(H)$ since,
for any $I\sub H$,
$\pr(G_{n,p}\supseteq H) \leq \pr(G_{n,p}\supseteq I) \leq \EE_pX_I$.
Strictly speaking, \cite{kahn2007thresholds} defines $\pE$ with
1 in place of 1/2.  
It also calls $\pE$ the 
\textit{expectation threshold},  
and gives no name to what has 
been called the ``expectation threshold"
since \cite{frankston2021thresholds}.)

Our focus in the present work is on the following suggestion from
\cite{kahn2007thresholds} to the effect that 
$p_c(H)$ and $\pE(H)$ are never very far apart;
this conjecture, called the ``second Kahn--Kalai Conjecture" in 
\cite{mossel2022second}, was in fact the starting point for 
\cite{kahn2007thresholds}.  
(We use $v_H$ and $e_H$ for $|V(H)|$ and $|E(H)|$.)

\begin{conj}\cite[Conjecture 2.1]{kahn2007thresholds}\label{KKC2}
There is a fixed $K$ such that for any graph $H$, 
\[p_c(H) < K \pE(H) \log v_H.\]
\end{conj}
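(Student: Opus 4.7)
The strategy is to reduce to the fractional Kahn--Kalai conjecture (the theorem of Frankston, Kahn, Narayanan, and Park), which gives
\[
p_c(H) \le K\,\pE^*(H)\log e_H \le K'\,\pE^*(H)\log v_H
\]
since $e_H \le \binom{v_H}{2}$. To obtain Conjecture~\ref{KKC2} it therefore suffices to establish the \emph{tight} comparison
\[
\pE^*(H) \le K''\,\pE(H)
\]
for an absolute constant $K''$, i.e.\ to strip \emph{both} logarithms from the bound on $\pE^*$ proved in the present paper.

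\textbf{Key step.} By the LP duality underlying Talagrand's fractional expectation threshold, $\pE^*(H) \le p$ is equivalent to the existence of a probability measure $\mu$ on copies of $H$ in $K_n$ that is \emph{$p$-spread}: $\mu(\{H' : H' \supseteq F\}) \le p^{|F|}$ for every edge set $F \subseteq \binom{[n]}{2}$. I would try to construct such a $\mu$ at $p = K''\pE(H)$. The natural uniform choice $\mu_{\mathrm{unif}}$ satisfies, for $F$ realising a subgraph of $H$ on $v_F$ vertices with $e_F$ edges,
\[
\mu_{\mathrm{unif}}(\{H' \supseteq F\}) \asymp \frac{N^*(F,H)}{n^{v_F}},
\]
where $N^*(F,H)$ is the number of labelled embeddings of $F$ into $H$. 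The spread requirement then reads $p \ge N^*(F,H)^{1/e_F}\cdot n^{-v_F/e_F}$; the $n^{-v_F/e_F}$ factor matches $\pE(H) \asymp \max_J n^{-v_J/e_J}$, but the prefactor $N^*(F,H)^{1/e_F}$ can be as large as $v_H^{v_F/e_F}$, so $\mu_{\mathrm{unif}}$ is off by a polynomial-in-$v_H$ factor and some non-uniform construction is unavoidable.

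\textbf{Main obstacle.} I therefore need a non-uniform $\mu$ simultaneously $O(\pE(H))$-spread over \emph{all} subgraph patterns $F$, not only the extremal $J^*$ realising $\pE(H)$. A natural route is a sequential, entropy-controlled construction: sample a copy of $H$ edge-by-edge (or vertex-by-vertex), at each step choosing among ``honest'' extensions so that the marginal on every realised sub-pattern stays uniform on its feasibility slice. The hard part is that damping the overcrowding near one subgraph $F$ propagates up the subgraph lattice of $H$ to every $F' \supseteq F$: this is exactly where the construction of the present paper bleeds two logarithms, one per ``resolution level'' of a $\sim \log v_H$-deep recursion. Eliminating the loss appears to require a Kruskal--Katona-type inequality in the subgraph lattice of $H$, relating uniformly in $F$ the number of $H$-extensions through a fixed embedding of $F$ to the corresponding counts through embeddings of denser $F' \supseteq F$. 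Establishing such a uniform monotonicity---so that slacks accumulated at successive refinement levels telescope rather than add---is, I expect, the real bottleneck, and absent it I would settle for the $\log^3 n$ bound this paper actually proves.
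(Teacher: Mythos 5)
The statement you are trying to prove is a \emph{conjecture} in this paper (the ``second Kahn--Kalai Conjecture''), not a theorem: the paper gives no proof of it, and neither does your proposal. Your reduction is the right one and is exactly the route the paper has in mind --- apply Theorem~\ref{FKNP} (FKNP) and note $\log e_H\le 2\log v_H$, so that everything hinges on the comparison $\pE^*(H)\le K''\,\pE(H)$ --- but that comparison is precisely Conjecture~\ref{CpEpE*} of the paper, which is open. What the paper actually proves (Theorem~\ref{MT''}, via the reformulation Theorem~\ref{MT}) is the weaker $\pE^*(H)<K\pE(H)\log^2 n$ in general, plus the clean $\pE^*(H)=O(\pE(H))$ only in the sparse regime $\pE(H)<1/(3n)$, which yields Theorems~\ref{MT'} and \ref{MT'_sparse} rather than Conjecture~\ref{KKC2}. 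Your ``Key step'' paragraph is therefore not a step but the entire problem, and you concede as much at the end; as written the proposal establishes nothing beyond what Theorem~\ref{FKNP} already gives, except to restate the needed spread/expectation-count condition, which (as the paper notes after \eqref{def.pe}) is equivalent to the definition of $\pE^*$ in this setting.

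Two smaller points. First, your diagnosis of where the paper ``bleeds two logarithms'' --- a $\sim\log v_H$-deep recursion with one log per resolution level --- does not match the actual mechanism: the paper's losses come from (i) replacing typical extension counts by worst-case bounds $\tilde\tau_{[W_{i-1},W_i]}(U,H)\le\tilde\mu_{p'}(W_{i-1},W_i)$ along a single leading decomposition (Proposition~\ref{lem.leading seq}), and (ii) the automorphism bound $\min\{v(W,Z),16\log n\}^{e(W,Z)}$ obtained through the sunflower lemmas (Lemmas~\ref{lem.no sunflower} and \ref{lem.sunflower}), whose $\log n$ the authors point out is unavoidable in Lemma~\ref{lem.no sunflower} itself. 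So the bottleneck is not a Kruskal--Katona-type monotonicity in the subgraph lattice so much as controlling local symmetries of overlapping extensions (cf.\ the conjectured improvement \eqref{improve3.4}). Second, your plan ignores the one regime where the conjecture is actually established here: for $\pE(H)<1/(3n)$ the paper proves $\pE^*(H)=O(\pE(H))$ by a different, elementary tree-counting argument (Section~\ref{sec.MT.sparse}), and any serious attack on the full conjecture should at least recover that case.
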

\nin
In the limited graphic setting to which it applies, 
Conjecture~\ref{KKC2} is considerably stronger than the main 
conjecture of \cite{kahn2007thresholds} 
(called the ``Kahn--Kalai Conjecture" since
\cite{talagrand2010many}),
which was recently proved by Pham and the third author \cite{park2024proof}. 
Here we show that this stronger version isn't \emph{too} far from 
correct, in that it is never off by more than a $\log^2n$ factor,
and that it does hold in a sufficiently sparse regime:

\begin{thm}\label{MT'} There is a fixed $K$ such that for any graph $H$,
\[p_c(H) \le K \pE(H) \log v_H \log ^2 n.\]
\end{thm}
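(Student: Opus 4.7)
The abstract announces a two-step strategy: first prove a new bound on Talagrand's fractional expectation threshold,
\[
\pE^*(H) \leq C\, \pE(H) \log^2 n,
\]
then combine with the FKNP theorem (the fractional relaxation of the KKC, \cite{frankston2021thresholds}), which yields $p_c(H) \leq C' \pE^*(H) \log e_H = O(\pE^*(H) \log v_H)$. Chaining the two inequalities produces Theorem~\ref{MT'}, so my plan concentrates entirely on the first inequality.

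By the LP duality built into $\pE^*$, showing $\pE^*(H) \leq p$ with $p = C \pE(H) \log^2 n$ is equivalent to constructing a probability measure $\mu$ on copies of $H$ in $K_n$ that is $p$-spread: $\mu(\{F : F \supseteq G\}) \leq p^{|E(G)|}$ for every $G \sub K_n$. For a fixed copy $G$ of $I \sub H$, a direct calculation with the uniform measure on $H$-copies gives $\mu(\{F \supseteq G\}) = N(H,I) \aut(I)/(n)_{v_I}$; comparing with the definitional bound $\pE(H)^{e_I} \geq \aut(I)/(2(n)_{v_I})$ shows the uniform measure is only $(2 N(H,I))^{1/e_I}$-spread relative to $\pE(H)$. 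Since $N(H,I)$ can be enormous (consider $H$ a disjoint union of many copies of $I$), the uniform measure is useless and a genuine re-weighting is essential.

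My plan is to build $\mu$ as a weighted mixture in which copies $F$ of $H$ with many embeddings of a ``critical'' subgraph are down-weighted; equivalently, for each $I \sub H$ one can sample a copy of $I$ in $K_n$ first and then extend it to $H$ uniformly at random, then mix these candidate measures against appropriate coefficients. Organising the choice of $I$ by a dyadic decomposition of the relevant density parameter (schematically, $\log(N(H,I)\aut(I)/(n)_{v_I}\pE(H)^{e_I})$) produces $O(\log n)$ active scales; a second $O(\log n)$ factor must then arise, plausibly from a further dyadic partition on the vertex count $v_I \in [v_H]$ or from iterating a spread-improvement step, giving the $\log^2 n$ overhead. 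The main obstacle is ensuring the spread bound uniformly in the subgraph type: a measure tuned to neutralise one ``bad'' $I$ can spoil the spread condition at another $I'$, so the weights must be chosen by an LP or saddle-point argument that exploits the $\pE(H)$ hypothesis at \emph{every} $I \sub H$ at once. I expect that a pure induction on $v_H$ will not suffice and that some covering lemma in the spirit of \cite{frankston2021thresholds} may be needed as a subroutine to handle this simultaneous control.
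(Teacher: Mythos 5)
Your reduction to proving $\pE^*(H)\le C\pE(H)\log^2 n$ and then invoking the FKNP theorem is indeed the paper's top-level strategy, and your observation that the uniform measure on $H$-copies is useless when $N(H,I)$ is large is correct and relevant. But from there the proposal diverges from the paper and, more importantly, leaves the substance of the argument undone.

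The first issue is that you are working with Talagrand's general definition of $\pE^*$ (existence of a $p$-spread measure on $H$-copies) when the paper points out, and this is the crucial simplification, that in the graph-containment setting $\pE^*$ collapses to the elementary expression $\pE^*(H)=\min\{p:\EE_pX_I\ge N(H,I)/2\ \forall I\sub H\}$ (equation \eqref{def.pe}). This is precisely because, among the family of lower bounds encoded by the spread/LP formulation, the binding constraint for graph containment is always one of the ``count copies of some $I$'' constraints. Once you accept this equivalence, there is no measure to construct at all: proving $\pE^*(H)\le p$ is the same as proving $N(H,I)\le\EE_pX_I$ for every $I\sub H$ (with a factor of $2$ absorbed into constants), which is the reformulation in Theorem~\ref{MT} via $q$-sparseness. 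Your plan, by contrast, proposes to build a nontrivial spread measure by re-weighting and mixing against dyadic scales and then settle the weights by an LP or saddle-point argument; none of this is specified, the $\log^2 n$ budget is only gestured at, and you yourself flag the central obstruction (a measure tuned to kill one bad $I$ may spoil another) without resolving it. That is a genuine gap: the heart of the theorem is exactly the simultaneous control you defer to ``some covering lemma,'' and there is no evidence the dyadic/LP scheme closes it.

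The second issue is that even granting the spread-measure framing, nothing in the proposal plays the role of the paper's actual engine. The paper fixes a leading decomposition $\emptyset=W_0\subsetneq\cdots\subsetneq W_k=V(F)$ (Proposition~\ref{lem.leading seq}), bounds $\tilde N(H,F)$ by a product of per-step extension counts $\tilde\tau_{[W_{i-1},W_i]}(U,H)$, and controls each factor by splitting extensions into $[B,A]$-trees (components) and bounding (i) the number of components via edge-disjointness and $q$-sparseness (Lemma~\ref{F.fixed.lem1}) and (ii) the size of each component via the automorphism/sunflower machinery of Lemmas~\ref{lem.no sunflower} and \ref{lem.sunflower}, culminating in Lemma~\ref{lem.tree.hard}. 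The two $\log n$ factors in Theorem~\ref{MT} have identifiable sources there (worst-case per-step bounds, and the $\min\{v(W,Z),16\log n\}^{e_T}$ automorphism bound), whereas in your sketch the two $\log n$'s are assigned speculatively to two unbuilt dyadic layers. In short, the proposal identifies the right target inequality but neither exploits the definitional collapse \eqref{def.pe} nor supplies a substitute for the leading-decomposition/sunflower analysis, so it does not constitute a proof.
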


\begin{thm}\label{MT'_sparse} 
There is a fixed $K$ such that if 
$\pE(H)<1/(3n)$, then 
\[
p_c(H) < K \pE(H)\log v_H.
\]
\end{thm}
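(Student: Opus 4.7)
The route is to show $\pE^*(H) = O(\pE(H))$ in the sparse regime, and then invoke the Frankston--Kahn--Narayanan--Park theorem. I would first observe that the hypothesis $\pE(H) < 1/(3n)$ forces $H$ to be a forest: any cycle $C_k \sub H$ gives $\pE(C_k) = \Theta(1/n)$ (from $\EE_p X_{C_k} = (n)_k p^k/(2k)$), which exceeds $1/(3n)$ once $n$ is large, contradicting $\pE(H) \geq \pE(C_k)$.

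For the fractional expectation bound I would try as the candidate $q$-spread measure the uniform distribution $\mu$ on copies of $H$ in $K_n$. By symmetry,
\[
\mu(\sigma \supseteq F) = N(H, F)/N(K_n, F)
\]
for every $F \sub E(K_n)$, so the $q$-spread condition with $q = C\pE(H)$ amounts to $(N(H,F)/N(K_n,F))^{1/|E(F)|} \leq C\pE(H)$ for every sub-forest $F$ of $H$. A Jensen/AM--GM argument across the components of $F$ reduces the verification to the case of sub-trees. Rewriting the resulting inequality as $N(H,F) \leq C^{|E(F)|}\EE_{\pE(H)}X_F$, the main obstacle is securing strong enough lower bounds on $\EE_{\pE(H)}X_F$: the trivial $\geq 1/2$ from the definition of $\pE$ is not enough, but one can exploit that if $H$ contains $m$ vertex-disjoint copies of $F$ then $\pE(H) \geq \pE(mF) \gtrsim (m/e)^{1/|E(F)|}\pE(F)$, yielding $\EE_{\pE(H)}X_F \gtrsim m/(2e)$; more generally, applying the definition of $\pE(H)$ to dense super-graphs of $F$ inside $H$ gives further slack. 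Combined with the structural constraints forced by sparsity --- in particular, each tree component of $H$ has at most $O(\log n)$ vertices and multiplicities of sub-trees are tightly bounded --- this should suffice to verify the key inequality with an absolute constant $C$.

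Finally, the FKNP theorem then gives $p_c(H) \leq K' \pE^*(H) \log e_H$. Since $H$ is a forest, $e_H < v_H$, so $\log e_H \leq \log v_H$; combining with $\pE^*(H) = O(\pE(H))$ yields the required $p_c(H) < K \pE(H)\log v_H$.
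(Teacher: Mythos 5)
Your high-level route is exactly the paper's: show $\pE^*(H) = O(\pE(H))$ in the sparse regime (the second part of \Cref{MT''}, equivalently of \Cref{MT}) and then close with the Frankston--Kahn--Narayanan--Park theorem. The initial observations are also correct and match the paper: $\pE(H) < 1/(3n)$ forces $H$ to be a forest, the spread/expectation-threshold condition reduces to showing $N(H,F) \leq \EE_{Kq}X_F$ for sub-trees $F$ (with $q = \pE(H)$), and the vertex-disjoint-copies argument (your ``$\nu(H,T) \lesssim \EE_q X_T$'') is precisely the paper's Claim~\ref{Cnu}.

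The genuine gap is the counting step. Once you have $N(H,F) \leq e \sum_T \nu(H,T) N(T,F) \leq e^2\sum_T \EE_q X_T \cdot N(T,F)$, summing over isomorphism types of trees $T$, you must actually evaluate this sum, and the number of $t$-vertex tree types grows exponentially in $t$, so component-size bounds and ``tightly bounded multiplicities'' alone do not close it. The paper's crucial move, which your sketch does not contain, is to recognize the inner sum $\sum_{v_T = t} \EE_q X_T \cdot N(T,F)$ as the expectation $\EE|\{(T,F): v_T = t,\ F \subseteq T \subseteq G_{n,q}\}|$ and then count such pairs $(T,F)$ in $K_n$ via the Cayley-type formula ($mt^{t-m-1}$ labeled $m$-component rooted forests on $[t]$ with specified roots), which produces the geometric series that makes the $\alpha \approx e^{-2}$ constant work. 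Your invocation of ``dense super-graphs of $F$ inside $H$'' has no traction here since $H$ is a forest, and ``this should suffice'' stands in for the actual combinatorial heart of the argument. Without the expectation-reformulation and Cayley count (or an equivalent substitute), the proposal does not establish the key inequality.
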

\nin
(While it's hard to judge past efforts on \Cref{KKC2}
absent published progress,
a comment from \cite{KLLM} is suggestive:
``but this [conjecture] is widely open, even if $\log n$ is replaced by $n^{o(1)}$.")

\mn

What we actually show, 
strengthening Theorems~\ref{MT'} and \ref{MT'_sparse},
is a connection between $\pE$ and another 
lower bound, the \emph{fractional expectation threshold for H}, which 
in our context may be defined as
\beq{def.pe} 
\pE^*(H)=\min\{p:\EE_pX_I \ge N(H,I)/2 \,\,\,\, \forall I \sub H\}.
\enq
(Note ``$\sub H$" is superfluous.)
In this case, 
$\pE^*(H)\leq p_c(H)$ is given by Markov's Inequality:
for any $I$, 
$\pr(G_{n,p}\supseteq H) \leq \pr(N(G_{n,p},I) \geq N(H,I)) \leq \EE_pX_I/N(H,I)$.

We omit the usual definition of
fractional expectation threshold, due to 
Talagrand \cite{talagrand1995all, talagrand2010many} 
(but named in \cite{frankston2021thresholds}), 
which applies to general increasing properties.  
It is not hard to see that in the present
context it agrees with the definition in \eqref{def.pe}
(briefly:  the fractional expectation threshold
is the maximum of a collection of lower bounds on $p_c$, which
maximum is, in the case of
graph containment, attained by 
the bound corresponding to one of the constraints in 
\eqref{def.pe}; see also \cite{mossel2022second}).

\mn

Trivially, 
\[\pE(H) \le \pE^*(H),\]
and it was shown in
\cite{frankston2021thresholds} that \Cref{KKC2} holds with $\pE$ replaced by $\pE^*$:   

\begin{thm}\cite[Theorem 1.1]{frankston2021thresholds}
\label{FKNP} There is a fixed $K$ such that for any graph $H$,
\[p_c(H) < K\pE^*(H)\log e_H.\]    
\end{thm}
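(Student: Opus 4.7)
The plan is to deduce Theorem~\ref{FKNP} from the main technical result of \cite{frankston2021thresholds}---what has come to be called the ``spread theorem''---which asserts: if $\mathcal{H}$ is a family of $\ell$-element subsets of a finite ground set $X$ and $\mu$ is a probability measure on $\mathcal{H}$ that is \emph{$q$-spread}, meaning $\mu(\{H' \in \mathcal{H}: S \subseteq H'\}) \leq q^{|S|}$ for every $S \subseteq X$, then for $p \geq K q \log \ell$ a $p$-random subset of $X$ contains some element of $\mathcal{H}$ with probability at least $1/2$ (for some absolute constant $K$). Taking $X = E(K_n)$ and $\mathcal{H}$ to be the edge-sets of copies of $H$ in $K_n$ (so $\ell = e_H$), the task reduces to exhibiting a probability measure on $\mathcal{H}$ that is $O(\pE^*(H))$-spread.

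The natural candidate is the uniform measure $\mu$ on $\mathcal{H}$, and verifying the spread condition is a short counting argument. Fix $S \subseteq E(K_n)$; we may assume $S \cong I$ for some $I \sub H$, else the relevant event is empty. Double-counting pairs $(J, H')$ with $H' \in \mathcal{H}$, $J \subseteq H'$, and $J \cong I$---by $H'$ to get $|\mathcal{H}| \cdot N(H, I)$, and by $J$, using vertex-transitivity of $K_n$ to conclude that every $J \cong I$ lies in the same number of copies of $H$---yields
\[
\mu(\{H' : S \subseteq H'\}) \;=\; \frac{N(H, I)}{N(K_n, I)}.
\]
Setting $p = \pE^*(H)$, the defining inequality $\EE_p X_I = N(K_n, I)\, p^{e_I} \geq N(H, I)/2$ rearranges to $N(H, I) / N(K_n, I) \leq 2 p^{|S|}$, so $\mu$ is $(2p)$-spread. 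The spread theorem then delivers $p_c(H) \leq 2 K p \log e_H = O(\pE^*(H) \log e_H)$, as claimed.

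The real substance of the argument is entirely absorbed into the spread theorem: the reduction above just recasts $\pE^*$-feasibility in the language of spreadness, but the passage from a $q$-spread distribution to a $p_c$ bound rests on the iterated random-cover/boosting argument that is the actual content of \cite{frankston2021thresholds}, which I would not attempt to reproduce from scratch here.
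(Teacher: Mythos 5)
This theorem is cited from \cite{frankston2021thresholds} and not proved in the present paper, so there is no in-paper argument to compare against. Your reduction is correct: the double-counting identity $\mu(\{H'\colon S\sub H'\})=N(H,I)/N(K_n,I)$ holds by the vertex-transitivity of $K_n$, the defining inequality for $\pE^*$ then gives $\mu(\{H'\colon S\sub H'\})\le 2p^{|S|}\le(2p)^{|S|}$ so the uniform measure on copies of $H$ is $(2\pE^*)$-spread, and the spread theorem yields the stated bound. This is essentially the route taken in \cite{frankston2021thresholds}; the only simplification you exploit is that for graph containment the symmetry of $K_n$ hands you the spread measure directly (uniform on copies), whereas the general setting there passes from a fractional cover to a spread measure via LP duality.
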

\nin
(Here we could replace $e_H$ by $v_H$ to agree with 
\Cref{KKC2}.  The present version, which
corresponds to what's in \cite{frankston2021thresholds}, is formally 
stronger since $\log e_H\leq 2\log v_H$,
but easily seen to be equivalent.)

\mn

In view of Theorem~\ref{FKNP}, 
Theorems~\ref{MT'} and \ref{MT'_sparse} are implied
by the following assertion.
The first part of this should be 
considered the main point of the present work, but we would also
like to 
stress the second part, as a small success with
Conjecture~\ref{CpEpE*} below, which is our ultimate goal (see also the 
remark following the conjecture).

\begin{thm}\label{MT''}
There is a fixed $K$ such that for any graph $H$,
\[
\pE^*(H) < K \pE(H)\log^2 n.
\]
Furthermore, there is a fixed $\ga>0$ such that if 
$\pE(H)<1/(3n)$, then $\pE^*(H) < \ga^{-1}\pE(H)$.
\end{thm}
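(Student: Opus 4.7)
The plan is to reduce \Cref{MT''} to a structural bound on a subgraph $I^* \subseteq H$ realizing the maximum in \eqref{def.pe}.  Write $p_0 := \pE(H)$ and $p^* := \pE^*(H)$, and pick $I^*$ with $\EE_{p^*} X_{I^*} = N(H, I^*)/2$ (so that $(p^*)^{e_{I^*}} M_{I^*} = N(H, I^*)/2$, where $M_I$ counts the copies of $I$ in $K_n$).  The $\pE$-constraint applied to this same $I^*$ gives $(p_0)^{e_{I^*}} M_{I^*} \ge 1/2$, and dividing these two yields
\[
\left(\frac{p^*}{p_0}\right)^{e_{I^*}} \;\le\; N(H, I^*), \qquad \text{hence} \qquad \frac{p^*}{p_0} \;\le\; N(H, I^*)^{1/e_{I^*}}.
\]
So both parts of \Cref{MT''} reduce to appropriate upper bounds on $N(H, I^*)^{1/e_{I^*}}$: by $O(\log^2 n)$ in general, and by a constant in the sparse regime.

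For the sparse case ($p_0 < 1/(3n)$), combining the $\pE$-constraint $p_0^{e_J} M_J \ge 1/2$ with $p_0 < 1/(3n)$ gives $M_J \ge (3n)^{e_J}/2$ for every $J \subseteq H$.  Via $M_J \le n^{v_J}/\aut(J)$ this forces $v_J > e_J$ (with some slack, namely $v_J - e_J \gtrsim e_J/\log n$) for every nonempty $J \subseteq H$, so all subgraphs of $H$ are essentially forests.  Under this rigid structural restriction the maximizer $I^*$ should be pushed toward $I^* = H$ itself (where $N(H, H) = 1$): for very small $I$ the value $q^*(I) := (N(H,I)/(2M_I))^{1/e_I}$ is dominated by the $\pE$-level, while for $I$ close to $H$ one has $N(H, I) = O(1)$.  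A short case analysis comparing $q^*$ across small, intermediate, and large $I \subseteq H$ should yield $N(H, I^*)^{1/e_{I^*}} = O(1)$.

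The main content is the general case, where we need $N(H, I^*)^{1/e_{I^*}} = O(\log^2 n)$.  This is \emph{false} for arbitrary $I \subseteq H$ — for instance, $N(H, K_2) = e_H$, which is often polynomial in $n$ — so we must genuinely use the extremality of $I^*$ together with the $\pE$-constraints at all other $J \subseteq H$.  The heuristic is that the maximizer $I^*$ is pinned: it cannot be ``too small'' (else a slightly larger $J \supseteq I^*$ would give a larger $q^*$-value through its count $N(H, J)$) nor ``too large'' (else the $\pE$-constraint at $I^*$ would already essentially match the $\pE^*$-constraint, forcing $p^* \approx p_0$).  Combining this extremality with a double-counting argument over extensions of $I^*$-copies to larger $J \subseteq H$ — perhaps via a Shearer-type entropy inequality or a spread-style estimate — should convert the $\pE$-constraints into the desired polylogarithmic bound.

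The principal obstacle is precisely this final combinatorial step.  Separating out the $\log^2 n$ factor (rather than the polynomial factor that naive counting like $N(H,I)^{1/e_I} \le \min\{v_H^{v_I/e_I},\, e \cdot e_H/e_I\}$ produces) requires a careful choice of the ``probe'' subgraphs $J$ and the right counting or entropy inequality to play against the $\pE$-constraints; this is the heart of the proof.
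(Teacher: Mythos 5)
Your reduction inequality $\pE^*/\pE \le N(H, I^*)^{1/e_{I^*}}$ is algebraically correct, but the target you then set for yourself — that $N(H, I^*)^{1/e_{I^*}} = O(\log^2 n)$ in general, or $O(1)$ in the sparse regime — is simply false, so the approach cannot work as stated. The step that loses too much is lower-bounding $\pE(H)$ by the single $\pE$-constraint at $I^*$ itself (i.e.\ by $(1/(2M_{I^*}))^{1/e_{I^*}}$); $\pE$ is a max over \emph{all} subgraphs, and the subgraph realizing it is typically nothing like $I^*$, so this replacement can cost a polynomial factor.

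A concrete counterexample: take $H$ to be a matching of $m$ edges with $m\approx n/3$. Here $\pE(H)$ is governed by $I=H$ (the full matching), giving $\pE\approx 2m/(en^2)<1/(3n)$, which even lands you in the ``sparse'' regime. Meanwhile the $\pE^*$-maximizer is $I^*=K_2$ (a single edge): $\pE^*(H)\approx 2m/n^2$, so $\pE^*/\pE\approx e$ — consistent with the theorem — yet $N(H,I^*)^{1/e_{I^*}}=N(H,K_2)=m=\Theta(n)$, which is nowhere near $O(1)$, let alone $O(\log^2 n)$. The same phenomenon occurs for $H$ a disjoint union of $m=\Theta(n)$ triangles, where $I^*$ is a small union of triangles and $N(H,I^*)^{1/e_{I^*}}$ is again polynomial in $n$ while the actual ratio $\pE^*/\pE$ is $O(1)$. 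So the quantity you are trying to bound genuinely overestimates $\pE^*/\pE$ by polynomial factors, and the entropy/spread machinery you gesture at in the ``principal obstacle'' paragraph cannot rescue a false statement.

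The paper avoids this trap entirely by never isolating a single $I^*$ and never dividing out the $M_I$'s: it reformulates the goal as showing $N(H,F) < \EE_pX_F$ for \emph{every} $F\sub H$ with $p=Kq\log^2 n$ (for $H$ $q$-sparse), keeping the factor $M_F=\EE_pX_F/p^{e_F}$ on the right side where it belongs. The proof then bounds $N(H,F)$ by factoring an embedding of $F$ into $H$ along a ``leading'' chain $\emptyset=W_0\subsetneq\cdots\subsetneq W_k=V(F)$ (\Cref{lem.leading seq}), and controls each extension step via an edge-disjointness count (\Cref{F.fixed.lem1}), a bound on the size of overlapping ``components'' (\Cref{F.gen.lem}, \Cref{lem.tree.hard}), and a sunflower-based automorphism bound (\Cref{lem.no sunflower}, \Cref{lem.sunflower}). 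None of this resembles the route you propose; if you want to salvage the $I^*$-centric framing, you would at minimum need to compare $(N(H,I^*)/M_{I^*})^{1/e_{I^*}}$ against $\pE$ itself rather than against the $\pE$-constraint at $I^*$, which loses the algebraic cleanliness that motivated the reduction in the first place.
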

\nin 
Our present belief is that the extra log factors are never necessary; that is, that the following 
strengthening of 
Conjecture~\ref{KKC2} holds.
\begin{conj}\label{CpEpE*}
There is a fixed $K$ such that for any graph $H$,
$\,\,\pE^*(H) < K \pE(H)$.
\end{conj}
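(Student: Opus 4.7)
Write $q=\pE(H)$. The inequality $\pE^*(H) \le K\pE(H)$ is equivalent to the assertion $\EE_{Kq}X_I \ge N(H,I)/2$ for every $I \sub H$. Substituting $\EE_pX_G=(n)_{v_G}p^{e_G}/|\Aut(G)|$ and the defining lower bound $q^{e_J} \ge |\Aut(J)|/(2(n)_{v_J})$ (available for every $J\sub H$), one sees the conjecture is equivalent to the following deterministic inequality: there is an absolute $c>0$ such that for every $H$ and every $I\sub H$, some $J=J(I)\sub H$ satisfies
\begin{equation}\label{eq:cb}
\rho(J)^{1/e_J} \;\ge\; c\,N(H,I)^{1/e_I}\,\rho(I)^{1/e_I},
\end{equation}
where $\rho(G):=|\Aut(G)|/(n)_{v_G}$. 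My plan is to prove \eqref{eq:cb} by an explicit construction of $J(I)$ inside $H$.

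The key observation is that the ``free'' quantity $|\Aut(J)|$ in \eqref{eq:cb} can be huge when $J$ contains many symmetric copies: e.g., for $J$ a disjoint union of $s$ copies of $I$, $|\Aut(J)| = |\Aut(I)|^s\cdot s!$, which via Stirling reduces \eqref{eq:cb} to $s\gtrsim N(H,I)$. So in the ``spread'' regime---where $H$ admits many nearly vertex-disjoint copies of $I$---the construction is immediate. In the opposite ``concentrated'' regime---where $H$'s copies of $I$ cluster inside a small $V'\sub V(H)$---the subgraph $H[V']$ has sharply elevated density, and a direct calculation shows $J:=H[V']$ alone satisfies \eqref{eq:cb}, the small denominator $(n)_{v_{H[V']}}$ supplying the factor $N(H,I)^{1/e_I}$. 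The plan is to interpolate between these regimes via a single greedy procedure: initialize $J_0=I$; at step $t$, among the copies of $I$ in $H$ not yet selected, append the one $\phi_t$ minimizing $|V(\phi_t(I))\sm V(J_{t-1})|$; halt at step $T$ when either no productive copy remains or adding another would reduce $e_{J_t}/v_{J_t}$ below a fixed fraction of $e_I/v_I$.

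The central obstacle is to prove that this greedy procedure always terminates with a $J$ satisfying \eqref{eq:cb} for an \emph{absolute} constant $c$. The core lemma needed is a \emph{scale-invariant} dichotomy: for $H$ and $I\sub H$ with $N(H,I)=m$, either (i) the greedy yields $T$ steps with $T$ and $|\Aut(J_T)|$ large enough that \eqref{eq:cb} holds via the Stirling calculation above, or (ii) the greedy halts early at a stage where the remaining $I$-copies are confined to a subgraph $H'$ whose density exceeds that of $H$ by a factor depending only on $I$, so that $J:=H'$ satisfies \eqref{eq:cb}. Crucially, the density increment in (ii) must be a \emph{constant} factor, not a $\log n$ factor; a logarithmic increment would only recover the weaker $\pE^*(H)\le \pE(H)\log^{O(1)}n$ of \Cref{MT''}. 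Establishing the constant-factor dichotomy is the main task, most plausibly via a supersaturation/container-type argument, and will likely require new extremal input specific to the graph-containment setting. A complementary tactic, useful when $I$ is highly symmetric, is Shearer's inequality applied to a fractional vertex cover of $V(I)$: this reduces $\log N_{\mathrm{inj}}(I,H)$ to a convex combination of $\log N_{\mathrm{inj}}(I[U],H)$'s (for $U$ in the cover), each in turn controlled by the $\pE$-constraint at a suitable $J\supseteq I[U]$, yielding \eqref{eq:cb} without any explicit construction and suggesting that the dichotomy above should pinch only in the ``unbalanced'' cases where $I$ has wildly varying edge-density across its vertex set.
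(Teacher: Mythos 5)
There is a genuine gap, and it is worth being clear about its nature: the statement you are addressing is \Cref{CpEpE*}, which the paper does \emph{not} prove --- it is stated as a conjecture, with the paper's actual results being \Cref{MT''}/\Cref{MT} (the same inequality with an extra $\log^2 n$ factor, proved via a leading decomposition, extension counting, and sunflower-based automorphism bounds) and the sparse case $\pE(H)<1/(3n)$. Your reduction of the conjecture to a deterministic inequality is fine: since $\pE(H)$ is the maximum over $J\sub H$ of the lower bounds $\left[\aut(J)/(2(n)_{v_J})\right]^{1/e_J}$, the conjecture is indeed equivalent to exhibiting, for each $I\sub H$, a ``witness'' $J\sub H$ whose bound beats $c\,N(H,I)^{1/e_I}\bigl[\aut(I)/(n)_{v_I}\bigr]^{1/e_I}$, and your disjoint-union computation correctly shows that $\Theta(N(H,I))$ \emph{vertex-disjoint} copies of $I$ suffice as such a witness. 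But everything after that is a plan, not a proof. The ``scale-invariant dichotomy'' you name as the core lemma --- either the greedy produces enough disjointness, or the copies of $I$ concentrate on a set $V'$ such that $J:=H[V']$ alone is a witness with a \emph{constant-factor} density increment --- is not established, and it is essentially a restatement of the conjecture's difficulty rather than a reduction of it. In particular, the ``concentrated regime'' claim is unsubstantiated: having many copies of $I$ packed into a small $V'$ raises the edge density of $H[V']$, but the witness inequality also involves $\aut(H[V'])$ and the exponent $1/e_{H[V']}$, and nothing in your sketch shows the gain survives normalization by $e_{H[V']}$ without logarithmic loss. The genuinely hard cases are the intermediate ones where copies of $I$ overlap partially; the paper's machinery ($q'$-leading decompositions, $[B,A]$-trees, and the automorphism bound of Lemma~\ref{lem.sunflower}) is built precisely to handle such overlaps, and it provably pays $\log n$ at two identified points (the worst-case extension bound in \eqref{tau.ub}, and the $16\log n$ in \Cref{lem.tree.hard}, whose conjectured constant-factor form \eqref{improve3.4} is itself open).

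Two further cautions. First, your greedy step needs genuinely vertex-disjoint copies to harvest the $s!$ in $\aut(J)$; ``nearly vertex-disjoint'' copies do not obviously give it, so the spread regime as stated is narrower than you suggest. Second, the scale of the difficulty is documented in the paper itself: even the reformulated \Cref{CpEpE*'} for very simple families of $F$ (bounded-degree forests, cycles) required a separate paper and nontrivial effort, so any argument that claims to dispatch the concentrated and interpolation regimes ``by a direct calculation'' or ``via a supersaturation/container-type argument'' without carrying it out should be presumed incomplete. As written, your proposal is a reasonable research program whose key lemma is exactly the open problem; it does not constitute a proof of \Cref{CpEpE*}, nor does it recover the weaker, proved bounds of \Cref{MT''}.
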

\nin
(This is stronger than a positive answer to 
\cite[Question 2.2]{kahn2007thresholds}, which asks whether
the ``expectation threshold" 
of \cite{frankston2021thresholds},
which is never more than $\pE^*(H)$, is $O(\pE(H))$.)

\mn

For the proof of Theorem~\ref{MT''}, 
we work with a reformulation.
Given $q \in [0,1]$, say a graph $J$ is \textit{$q$-sparse}
if 
\[
\EE_qX_{I} \ge 1 \,\,\,\forall I\sub J,
\]
and note that, apart from the irrelevant switch from 1/2 to 1
(which we abusively ignore),
this is just another way of saying $\pE(H) \le q$.
We then have the following appealing
restatement of Conjecture~\ref{CpEpE*}, and the promised
reformulation of Theorem~\ref{MT''}.
\begin{conj}\label{CpEpE*'}
There is a fixed $K$ such that
if $H$ is $q$-sparse and $p=Kq$, then
\[
N(H,F) < \EE_pX_F \quad \forall F \sub H.
\]
\end{conj}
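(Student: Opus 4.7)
I would work entirely in the reformulated setting: assuming $\EE_q X_I \ge 1$ for every $I \sub H$, deduce that $N(H,F) < \EE_p X_F = p^{e_F} N(K_n,F)$ for every $F \sub H$, where $p = Kq$. Rearranging, for each $F$ it suffices to produce a witness subgraph $J \sub H$ satisfying
\[
N(H,F)\cdot N(K_n,J)^{e_F/e_J} \;\le\; K^{e_F}\, N(K_n,F),
\]
because then applying the $q$-sparsity hypothesis to $J$ gives exactly the required inequality. The entire game is choosing $J$ well. Taking $J=F$ reduces the task to $N(H,F)\le K^{e_F}$, which fails as soon as $F$ appears many times in $H$. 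Taking $J=H$ trivially handles $F=H$ and, by a short calculation, succeeds whenever $F$ has density at most that of $H$; small cases like $H = kK_3, F = K_3$ or $H = K_4, F = K_2$ confirm that an absolute constant $K$ is within reach via such self-pivoted choices.

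The obstacle is $F$ strictly denser than $H$: e.g.\ with $H = K_3 \cup M_m$ (a triangle plus a large matching) and $F = K_3$, neither $J=F$ nor $J=H$ works, and one is forced to take $J$ to be a \emph{dense local neighborhood} of $F$ in $H$ (here $J = K_3$). So any proof must pick $J$ adaptively, balancing the competing needs that $J$ be dense enough to have a strong sparsity constraint, yet tailored enough to $F$ to account for its multiplicity in $H$. My starting plan would be to build $J$ greedily, enlarging $F$ inside $H$ one edge at a time and tracking how each addition affects both sides of the target inequality.

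For the sparse regime $q < 1/(3n)$ life is much easier. The hypothesis applied to every $I \sub H$ forces $q^{e_I}\binom{n}{v_I} \ge 1$, hence $v_I > e_I$, so $H$ itself must be a forest. Here a leaf-deletion induction on $H$ (or a more formal tree decomposition) enumerates $F$-copies in $H$ step by step, and the sparsity of the remaining subtree controls each step. I expect the bound with absolute $K$ to fall out of a direct combinatorial calculation in this case, yielding the second half of Theorem~\ref{MT''}.

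For general $H$ my plan would go via a probabilistic surrogate: keep each edge of $H$ independently with some probability $r$, producing $H' \sub H$; bound $\EE\, N(H',F)$ using sparsity (this is where the choice of $J$ is effectively made ``on average''); and translate back to $N(H,F)$ via Markov. Calibrating $r$ and union-bounding over the roughly $v_H^{v_F}$ embedding-types of $F$ in $H$ should reproduce the $\log^2 n$ loss of Theorem~\ref{MT''}. The \textbf{main obstacle} to Conjecture~\ref{CpEpE*'} itself is removing this loss: one needs a deterministic rule selecting $J$ for every $F$, equivalently a combinatorial principle guaranteeing that some subgraph of $H$ simultaneously has strong sparsity slack and hosts the bulk of the $F$-copies. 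This appears to demand a genuinely new ingredient, perhaps an entropy argument in the spirit of Radhakrishnan's approach to counting-type inequalities.
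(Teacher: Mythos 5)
The statement is Conjecture~\ref{CpEpE*'}, which the paper itself does \emph{not} prove; what the paper establishes is the weaker Theorem~\ref{MT}, with a $\log^2 n$ loss in general (and with no loss when $q<1/(3n)$). You recognize this and aim at the weakened version, so I will compare against the proof of Theorem~\ref{MT}. Your witness-subgraph reduction --- find $J\sub H$ with $N(H,F)\cdot N(K_n,J)^{e_F/e_J}\le K^{e_F}N(K_n,F)$ --- is a correct sufficient condition, but it exploits only a \emph{single} constraint $\EE_qX_J\ge 1$. The paper's argument uses many such constraints at once, applied to subgraphs it \emph{builds} from overlapping extensions of $F$: edge-disjoint unions of extensions in Lemma~\ref{F.fixed.lem1}, sunflower unions in Lemma~\ref{lem.no sunflower}, $[B,A]$-trees in Lemmas~\ref{F.gen.lem} and \ref{lem.tree.hard}, and vertex-disjoint packings in Claim~\ref{Cnu}. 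Your own example ($H=K_3\cup M_m$, $F=K_3$) already shows a good $J$ must be chosen $F$-locally, and in general no single $J$ both carries tight sparsity slack and accounts for every placement of $F$ in $H$; this is precisely why the paper pivots to a leading decomposition of $F$ (Proposition~\ref{lem.leading seq}) and bounds extensions block by block rather than looking for one global witness. Your sparse-regime sketch is closest in spirit to Section~\ref{sec.MT.sparse}: you correctly note cycles are excluded and trees remain, but the paper then proceeds via a vertex-disjoint packing estimate (Claim~\ref{Cnu}) plus a Cayley-type count ($mt^{t-m-1}$) of rooted spanning forests, not a leaf-deletion induction; your induction might work but you do not address whether sparsity and automorphism factors survive the deletion.

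For the general case your sparsification plan has a concrete gap. Keeping each edge of $H$ with probability $r$ gives $\EE\,N(H',F)=r^{e_F}N(H,F)$, which does not interact with the sparsity hypothesis at all; and the random subgraph $H'$ has no reason to be $q$-sparse, so you cannot apply the hypothesis to it either. Worse, the proposed union bound over the $\sim v_H^{v_F}$ embedding types of $F$ requires failure probabilities of order $v_H^{-v_F}$, far beyond what Markov supplies; Markov gives constant-probability control, not exponential concentration. The ideas that actually drive the paper's proof of Theorem~\ref{MT} --- the leading decomposition of $F$, the $[B,A]$-tree structure encoding how overlapping extensions combine, and above all the sunflower-based bound on automorphism counts (Lemma~\ref{lem.sunflower}, whose absence of a $\log n$-free version is one source of the loss) --- are entirely missing from your outline. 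Any serious attempt on either the conjecture or the $\log^2 n$ theorem will need a mechanism that tracks automorphisms of unions of copies; that is the genuinely hard part, and your plan does not engage with it.
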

\nin
(Again ``$\sub H$" is unnecessary.) 

\mn
\emph{Remark.}
Conjecture~\ref{CpEpE*'} is challenging even 
for simple classes of $F$'s.
In a separate paper \cite{DKP3} we 
prove it for a few such classes, namely
(a) cliques, (b) bounded degree forests, and (c) cycles, 
which turn out to be (a) not hard (in retrospect), 
(b) quite hard (as in, \emph{hard to find},
though some of the details are also not easy), and
(c) rather hard, even given the result for paths 
in (b).
Ideas developed to prove these results played an important role 
in the discovery of the proof of Theorem~\ref{MT}.

\begin{thm}\label{MT}
There is a fixed $K$ such that if $H$ is $q$-sparse and 
$p=Kq\log^2 n$, then
\beq{eq.MT} 
N(H,F) < \EE_pX_F \quad \forall F \sub H.
\enq
Furthermore, there is a fixed $\ga>0$ such that
\eqref{eq.MT} holds if $H$ is $q$-sparse with
$q=\ga p<1/(3n)$.
\end{thm}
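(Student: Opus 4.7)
The proof reduces to showing, for every $F\sub H$, the dimensionless inequality
\[
N(H,F)\;\le\;(K\log^2 n)^{e_F}\,\EE_q X_F,
\]
obtained from $\EE_p X_F=(p/q)^{e_F}\EE_q X_F$ with $p/q=K\log^2 n$ (and, for the sparse part, with $\alpha^{-1}$ in place of $K\log^2 n$). The hypothesis of $q$-sparsity supplies $\EE_q X_F\ge 1$ for free, but since $N(H,F)$ can easily be polynomial in $n$ (already $N(H,K_2)=e_H$), this crude bound is far from sufficient; the point is to exploit $q$-sparsity via auxiliary subgraphs that constrain the global structure of $H$.

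I would first settle the sparse regime. When $q<1/(3n)$ we have $\EE_q X_{C_k}\le(qn)^k/(2k)<1$ for every $k\ge 3$, so $q$-sparsity forbids every cycle; hence $H$ is a forest and every $F\sub H$ is one too. Counting embeddings of a sub-forest $F$ into a forest $H$ splits componentwise, and within a tree reduces, via a rooted BFS expansion, to moment estimates
\[
\sum_{v\in V(H)}\binom{d_H(v)}{k}\;\lesssim\;\EE_q X_{K_{1,k}}\qquad(k\ge 1),
\]
which come from applying the $q$-sparsity hypothesis to the stars $K_{1,k}\sub H$. After some bookkeeping this delivers $N(H,F)\le C^{e_F}\EE_q X_F$ for an absolute constant $C$, and we take $\alpha=1/C$.

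For the general case I would bound $N(H,F)$ by expanding $F$ into $H$ edge by edge. Fix an ordering $e_1,\dots,e_{e_F}$ of $E(F)$ so that each prefix is connected on its vertex-support and each new edge introduces at most one new vertex. The number of embeddings of $F$ is then at most $2e_H\cdot\prod_{i\ge 2}(\text{extension count at step }i)$, where the extension count is the degree in $H$ of an already-placed vertex when $e_i$ brings in a new vertex, and a codegree when $e_i$ closes a cycle. Each such extension count is controlled by applying $q$-sparsity to an appropriate auxiliary subgraph -- $K_{1,k}$ for degrees, books $B_k$ for codegrees, and more generally ``joints'' obtained by decorating an edge with common neighbors. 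This yields $\Delta(H)\le O(qn+\log n)$ together with analogous codegree bounds, and substituting back shows that $N(H,F)/\EE_q X_F$ is at most $(C\log n)^{e_F}$, which is comfortably less than $(K\log^2 n)^{e_F}$.

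The main obstacle -- and the source of the $\log^2 n$ loss relative to the conjectural $(Kq)^{e_F}\EE_q X_F$ bound -- is the gap in each structural bound between the worst case (e.g.\ $\Delta(H)\le qn+\log n$) and the typical case ($\Delta(H)\approx qn$), so each of the $e_F$ expansion steps can a priori lose a factor of $\log n$. A second $\log n$ appears when interpolating between the regimes $qn\lesssim 1$ and $qn\gg 1$ via a dyadic decomposition of the vertices of $H$ by their local codegree profile, with the exceptional high-codegree part handled separately. Removing either of these losses would essentially settle Conjecture~\ref{CpEpE*'} and seems to require the refined averaging arguments tailored to cliques, bounded-degree forests, and cycles in \cite{DKP3}, whose extension to arbitrary $F$ is currently unknown.
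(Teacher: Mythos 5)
Your high-level reduction---proving $N(H,F)\le (K\log^2 n)^{e_F}\,\EE_q X_F$ and $N(H,F)\le \ga^{-e_F}\EE_q X_F$, respectively---is the right target, but both of your arguments diverge substantially from the paper's and contain gaps that appear fatal.

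For the general case, the edge-by-edge (or vertex-by-vertex) expansion does not match the multiplicative structure of $\EE_q X_F$. The trouble arises at any single-vertex extension step whose ``expected count'' $nq^d$ (with $d$ back-edges) is $\ll 1$: the actual number of extensions is a nonnegative integer, and your sunflower-type structural bounds can at best cap it at $O(\log n)$, which then overshoots $nq^d$ by an unbounded factor. Concretely, take $F=K_4$ and $q\approx\sqrt{\log n/n}$ (a legitimate $q$-sparse regime for $H\supseteq K_4$, since $\EE_qX_{K_4}\approx n\log^3 n\gg 1$). Your product bound $e_H\cdot\delta_2\cdot\delta_3$ gives at most $O(n^2q)\cdot O(\log n)\cdot O(\log n)=O(n^{3/2}\log^{5/2}n)$, which exceeds $\EE_qX_{K_4}$ by roughly $\sqrt{n/\log n}$---a polynomial, not polylogarithmic, loss. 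The paper avoids this by never placing a ``$\mu<1$'' vertex on its own: it first constructs the \emph{leading decomposition} $\0=W_0\subsetneq W_1\subsetneq\cdots\subsetneq W_k=V(F)$ of \Cref{lem.leading seq}, in which each chunk satisfies $\mu_{q'}(W_{i-1},W_i)\ge 1$ and $[W_{i-1},W_i]$ is $q'$-leading, and then bounds extensions one chunk at a time. The component/tree decomposition of \Cref{F.fixed.lem1,F.gen.lem}, the tree lemma \Cref{lem.tree.hard}, and the sunflower/automorphism control in \Cref{lem.no sunflower,lem.sunflower} exist precisely to handle the multi-vertex chunk case, which single-vertex degree/codegree/book bounds cannot.

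For the sparse case, the BFS-moment argument has a more immediate structural problem. When $nq<1/3$, the per-vertex factor in $\EE_q X_F\approx n(nq)^{v_F-1}/\aut(F)$ is \emph{less than} $1$, while every degree in a BFS product is at least $1$; so no term-by-term comparison of ``degree $\lesssim C\cdot nq$'' can hold, and a BFS product necessarily overcounts by an exponential-in-$v_F$ factor that you cannot absorb into a fixed $\ga^{-1}$. Moreover, the moment inequality you invoke, $\sum_v\binom{d_H(v)}{k}\lesssim\EE_qX_{K_{1,k}}$, is exactly the statement $N(H,K_{1,k})=O^{e_F}(\EE_qX_{K_{1,k}})$---a special case of the theorem being proved, not something $q$-sparsity ``supplies for free'' ($q$-sparsity only gives a \emph{lower} bound on $\EE_qX_{K_{1,k}}$). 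The paper's \Cref{sec.MT.sparse} is global rather than local: it first observes $H$ is a forest, then decomposes $N(H,F)$ over isomorphism types of tree components $T$, bounds the vertex-disjoint packing number $\nu(H,T)$ by $e\cdot\EE_qX_T$ via \Cref{Cnu}, and sums using the Cayley-type count $mt^{t-m-1}$ of rooted labeled forests. That mechanism is entirely different from (and much sharper than) a local BFS product.

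Finally, your diagnosis of the two $\log$ losses is not quite where the paper locates them. Both extra logs live in the ``large $q$'' argument: one from applying the worst-case bound \eqref{tau.ub} uniformly at every step of the leading decomposition, and one from the $16\log n$ in \Cref{lem.tree.hard} (conjecturally removable, see \eqref{improve3.4}). The sparse regime contributes no $\log$ at all.
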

\nin(Eventually $\ga\approx e^{-2}$ is enough;
see the end of Section~\ref{sec.MT.sparse}.)

\mn
\emph{Remark.}
The proof of \Cref{MT} also shows that if $|E(F)|<C $, then
\eqref{eq.MT} holds for $p=K_{_C} q\log n$. 

\mn

The rest of the paper is organized as follows. 
\Cref{sec.prelim} 
includes definitions and a few easy points, and
sketches the main line of argument for
\Cref{MT}. 
This is implemented in
Sections \ref{sec.MT} and \ref{sec.lem.tree.hard}, and 
the quite different argument for smaller $q$ is given in \Cref{sec.MT.sparse}.

\subsection*{Usage}

For a graph $J$ and $U\sub V(J)$, the neighborhood,
$N(U)=N_J(U)$, is the set of vertices with neighbors in $U$, and
$J[U]$ is the subgraph of $J$ induced by $U$.  
As above, the sizes of 
$V(J)$ and $E(J)$ are denoted $v_J$ and $e_J$ (or $v(J)$ and $e(J)$). 
We use $[a]=\{1,2,\ldots,a\}$ and $(n)_a=n(n-1)\cdots (n-a+1)$
(for positive integer $a$), and,
throughout the paper, take $\log$ to mean $\log_2$. 
We make no effort to optimize constant factors,
and, in line with common practice, 
pretend all large numbers are integers.

\section{Proof preview and preliminaries}\label{sec.prelim}

Here we introduce a few basic notions that 
provide the framework for most of what follows.
With these in hand, we then
sketch a portion of the proof of
\Cref{MT}, 
both to give a general idea of strategy and
to point to sources of the 
``extra'' $\log$ factors that 
\Cref{CpEpE*'} says should be unnecessary.
In closing the section we note some easy points, 
in particular
slightly simplifying our assignment by showing that 
in proving Theorem~\ref{MT} it is enough to consider 
connected $F$.

\subsection*{Extensions and leading decomposition}

For $W \subsetneq Z \sub V(J)$, the \emph{subgraph of $J$ on $Z$ rooted at $W$}, denoted $J[W,Z]$, has vertex set $Z$
and edge set $E(J[Z]) \setminus E(J[W])$.
When the identity of $J$ is either clear or 
irrelevant (e.g.\ for the rest of the present discussion), we
usually abbreviate
$J[W,Z]=[W,Z]$.
(Any $J$ used without qualification is a general graph.)
We use $e(W,Z)$ for the number of edges of $[W,Z]$ and $v(W,Z)$ for $|Z~\setminus~W|$.
We will also use $[B,A]$ for $[B,B\cup A]$,
and similarly for $v(B,A)$ and $e(B,A)$
(note this includes edges contained in $A$).
The set of 
automorphisms of $J$ is
$\Aut(J)$, and
$\Aut(W,Z)$ is the set of automorphisms of $[W,Z]$ 
that fix $W$ pointwise.
We will always use $\aut(\cdot)$ for $|\Aut(\cdot)|$.

In what follows we will have: $J$,
$W,Z$ as above; 
a graph $G$ (usually $H$) and $U\sub V(G)$ of size $|W|$;
and orderings $(w_1, \ldots, w_s)$ and $(u_1,\ldots,u_s)$ of $W$ and $U$, which will be
left implicit.
A \textit{$[W,Z]$-extension on $U$} (\textit{in $G$} if the 
identity of $G$ is not clear) is then an injection
$\psi:Z\ra V(G)$ that extends the map $w_i \mapsto u_i$ ($i \in [s]$)
and is \emph{edge-preserving} (i.e.\
$\{x,x'\}\in E(J)\, \Rightarrow \,\{\psi(x), \psi(x')\}\in E(G)$). When $W=\0$, such an extension is a 
\emph{labeled copy} of $J[Z]$ in $G$.
Rooted graphs $[W,Z]$ and $[U,V]$ with implicit orderings $(w_1, \ldots, w_s)$ and $(u_1,\ldots,u_s)$ of $W$ and $U$ are 
\emph{isomorphic} if there is an isomorphism of $Z$ to $V$
extending the map $w_i \mapsto u_i$ ($i \in [s]$).

Extensions $\psi$ and $\psi'$ 
are \textit{equivalent} if $\psi=\psi' \circ \gamma$ for some $\gamma \in \Aut(W,Z)$, and 
a \textit{copy of $[W,Z]$ on $U$} is an equivalence class of such
extensions (so an ``unlabeled" extension, but note we continue to label $W$ and $U$). 

\mn

We use $\tilde \tau_{[W,Z]}(U,G)$ and $\tau_{[W,Z]}(U,G)$ 
for the numbers of $[W,Z]$-extensions and copies of $[W,Z]$ on $U$ in $G$ 
(so $\tilde \tau_{[W,Z]}(U,G)= \tau_{[W,Z]}(U,G)\cdot \aut(W,Z)$), and set
\beq{mumu}
\tilde \mu_p(W,Z)=n^{v(W,Z)}p^{e(W,Z)} \text{ and } \mu_p(W,Z)=\tilde \mu_p(W,Z)/\aut(W,Z);
\enq
these are surrogates for the uglier 
$\EE ~\tilde \tau_{[W,Z]}(U,G)$ 
($= (n-|W|)_{v(W,Z)}p^{e(W,Z)}$) and $\EE ~\tau_{[W,Z]}(U,G)$.
We also use $\tilde \mu_p(J[W,Z])$ and $\mu_p(J[W,Z])$ 
if $J$ is unclear, and 
$\mu_p(B,A)$ for $\mu_p(B,B\cup A)$ (and similarly
for $\tilde \mu_p$).

\mn

The next notion is key. 
\begin{mydef}\label{def.leading} 
    A rooted graph $[W,Z]$ is \textit{$p$-leading} if $\mu_p(Y,Z)<1$ for all $Y$ satisfying $W \subsetneq Y \subsetneq Z$. 
\end{mydef}

\begin{prop}\label{lem.leading seq} 
If $0 \le q \le p \le 1$ 
and $J$ is $q$-sparse, 
then there is a sequence
\[
\emptyset=W_0 \subsetneq W_1 \subsetneq \cdots \subsetneq W_k=V(J)
\]
such that $J[W_{i-1}, W_i]$ is $p$-leading and $\mu_p(W_{i-1}, W_i) \ge 1$ 
for $i \in [k].$ 
\end{prop}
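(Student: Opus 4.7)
The plan is to build the sequence in reverse, from $V(J)$ down to $\emptyset$. Set $Z_0:=V(J)$ and, inductively, so long as $Z_j \ne \emptyset$, let $Z_{j+1}$ be any maximal element (with respect to inclusion) of the finite poset
\[
\cC_j := \{W \subsetneq Z_j : \mu_p(W, Z_j) \ge 1\}.
\]
Since $Z_{j+1} \subsetneq Z_j$ at every step, the process terminates at some $Z_k = \emptyset$, and relabeling $W_i := Z_{k-i}$ produces a nested chain $\emptyset = W_0 \subsetneq \cdots \subsetneq W_k = V(J)$.

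The only thing that could derail this is $\cC_j$ being empty, and ruling that out is precisely where the $q$-sparse hypothesis enters. Applying $q$-sparseness to the induced subgraph $J[Z_j] \sub J$ gives $\EE_q X_{J[Z_j]} \ge 1$, and combining $(n)_a \le n^a$ with $p \ge q$ yields
\[
\mu_p(\emptyset, Z_j) \;\ge\; \EE_p X_{J[Z_j]} \;\ge\; \EE_q X_{J[Z_j]} \;\ge\; 1,
\]
so $\emptyset \in \cC_j$ whenever $Z_j \ne \emptyset$ and a maximal element therefore exists.

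Both conclusions now fall out directly. The bound $\mu_p(W_{i-1}, W_i) \ge 1$ holds by definition of $\cC_{k-i}$. For the $p$-leading condition, suppose for contradiction that some $Y$ with $W_{i-1} \subsetneq Y \subsetneq W_i$ satisfied $\mu_p(Y, W_i) \ge 1$; then $Y$ itself would lie in $\cC_{k-i}$ while strictly containing $W_{i-1}$, contradicting the choice of $W_{i-1}$ as a maximal element of that poset. There is no real obstacle here: the only moment requiring actual input from the hypothesis is the nonemptiness of $\cC_j$, which the combination of $q$-sparseness and $p \ge q$ delivers immediately. Notably, the multiplicative identity $\tilde\mu_p(W,Z) = \tilde\mu_p(W,Y)\,\tilde\mu_p(Y,Z)$ is not needed, since the $p$-leading property is built directly into the maximality step.
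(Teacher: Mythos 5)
Your proposal is correct and is essentially the paper's own argument: the paper proceeds by induction on $v_J$, choosing at each stage a maximal $Y \subsetneq V(J)$ with $\mu_q(Y, V(J)) \ge 1$ (which is automatically $q$-leading by maximality) and recursing on $J[Y]$; you have simply unrolled that induction into a downward iterative construction, and absorbed the paper's preliminary reduction to $p=q$ into your nonemptiness step via $p\ge q$ directly. The key observation — that maximality of the chosen subset simultaneously delivers both $\mu_p \ge 1$ and the leading property, with $q$-sparseness of induced subgraphs guaranteeing a valid choice exists — is identical.
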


\begin{proof} 
Note that if $J$ is $q$-sparse, then
(i) $J$ is $q'$-sparse for every $q'>q$,
and (ii) every subgraph of $J$ is $q$-sparse.    
By (i), we just need to prove the proposition when $p=q$.  
We proceed by induction on $v_J\geq 1$
(with base case vacuous).
Since $q$-sparseness gives
$\mu_q(\emptyset, V(J))\ge \EE_qX_J \ge 1$, we may
choose $Y \subsetneq V(J)$ maximal with $\mu_q(Y, V(J)) \ge 1$. 
By maximality, $[Y,V(J)]$ is $q$-leading; so 
either $Y=\emptyset$ and we are done, 
or, since $J[Y]$ is $q$-sparse (by (ii)),
induction gives a sequence 
$\emptyset=W_0 \subsetneq W_1 \subsetneq \cdots \subsetneq W_{k-1}=Y$ as in the proposition, and appending 
$W_k:= V(J)$ completes the proof.
\end{proof}

\mn

\subsection*{Sketch}

Here we try to give some idea of what will 
happen in the main line of argument for
Theorem~\ref{MT}.
For this informal discussion we call labeled copies
\emph{embeddings}.
We fix a 
decomposition
\[
\0 = W_0\subsetneq W_1 \subsetneq \dots \subsetneq W_k = V(F)
\]
as in \Cref{lem.leading seq},
with $p$ replaced by $q' \approx q \log n$,
and, for each $i\in [k]$, bound 
the number of ways an embedding 
of $F[W_{i-1}]$ in $H$ can be extended to an embedding 
of $F[W_i]$. 
(One of the extra log factors in Theorem~\ref{MT} can be blamed
on our use of 
this worst case bound for
quantities that should usually be much smaller.)

For the bound,
we note that the extensions of a
given embedding divide into 
``components," and show, using the sparseness of $H$,
that (i) the number of  
components is not too large, and (ii)
no component contains too many extensions.
These assertions are shown in Lemmas~\ref{F.fixed.lem1} and
\ref{F.gen.lem}, with the first easy and 
the second the main part of our argument.

The main point for \Cref{F.gen.lem}
is \Cref{lem.tree.hard}, which says, roughly,
that a union, $T$,
of many overlapping extensions 
of a given $F[W_{i-1}]$ has
small $\EE_qX_T$, so cannot appear
in a suitably sparse $H$. 
Crucial here is limiting the numbers
of automorphisms of our extensions:
the contribution to our bound on
$\EE_qX_T$ (which is really a bound on the 
number of \emph{labeled} copies)
of an extension of 
``type" $[Y,Z]$ with $Y\neq W_{i-1}$ is essentially
$\tilde\mu_q(Y,Z) = \mu_q(Y,Z)\cdot\aut(Y,Z)$,
and $\mu_q(Y,Z)$ is small because 
$[W,Z]$ is leading; so the $\aut(Y,Z)$'s become the issue.
(Note that this gets us only so far, since 
such ``local" symmetries may not be reflected in
$\aut(T)$; see following \Cref{lem.tree.hard}
for a possible improvement that 
would eliminate the second ``extra" $\log$
in Theorem~\ref{MT}.)

Control of automorphism counts is mainly based on
Lemmas~\ref{lem.no sunflower}~and~\ref{lem.sunflower},
the more important of which, Lemma~\ref{lem.sunflower}, 
says that a graph with many automorphisms must contain 
a large ``sunflower'' (see \Cref{def.sunflower}). 
Concretely, we show that for $k >4\log n$, a graph 
(or rooted graph) $J$ with $n$ vertices and 
no $k$-sunflower has at most $O(k)^{e_J}$ automorphisms. We 
conjecture this holds for \emph{all} $k$, 
which seems to us both interesting in itself
and a probably necessary step toward the above-mentioned
improvement of \Cref{lem.tree.hard}.

\mn

\subsection*{Small points}
We now turn to the above-mentioned observations 
and reduction.

\begin{prop}\label{prop.small claims}
If 
$J[W,Z]$ is $p$-leading, with $Z \setminus W=A$ and $N(A) \cap W=B$, then:

\mn
{\rm (a)}
$\mu_p(W,Z)\le n$, with the inequality strict if $|A| \ge 2$;

\mn
{\rm (b)}
if $p\ge 1/n$, then either $|A|=1$ or each $v \in A$ has degree at least $2$ in $[B,A]$;

\mn
{\rm (c)}
if $p\ge 1/n$, $|A|\ge 2$, and $\mu_p(W,Z)\ge 1$,
then $e(B,A)\ge \max\{|A|,|B|+1\} $, and  
$e(B,A) > |A|$ unless $B=\0$ and $J[A]$ is a cycle.

\end{prop}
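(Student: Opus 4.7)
The plan is to derive each of (a), (b), (c) from the $p$-leading hypothesis by applying it to a suitable $Y$ strictly between $W$ and $Z$. My main bookkeeping identity would be
\[
\tilde\mu_p(W,Z) \;=\; np^{d}\,\tilde\mu_p(W\cup\{v\}, Z),
\]
where $v\in A$ and $d=|N(v)\cap W|$; this just records that passing from $[W\cup\{v\}, Z]$ back to $[W, Z]$ adjoins $v$ to the root and reinstates its $d$ edges to $W$. Combined with the subgroup inclusion $\aut(W\cup\{v\}, Z)\le\aut(W, Z)$ (any automorphism of $[W,Z]$ fixing $v$ is an automorphism of $[W\cup\{v\}, Z]$), this yields $\mu_p(W,Z)\le np^{d}\mu_p(W\cup\{v\}, Z)$, which I would combine with leading whenever $W\cup\{v\}\subsetneq Z$.

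For part (a): if $|A|=1$ then $\aut(W,Z)=1$, giving $\mu_p(W,Z)=np^{e(B,A)}\le n$ directly; if $|A|\ge 2$, pick any $v\in A$ and apply leading to $Y=W\cup\{v\}$ to get $\mu_p(Y,Z)<1$, so $\mu_p(W,Z)<np^{d}\le n$. For part (b): with $|A|\ge 2$, apply leading to $Y=Z\setminus\{v\}$ (here $Y\supsetneq W$); since $\aut(Y,Z)=1$ and $\mu_p(Y,Z)=np^{d_v}$, we get $np^{d_v}<1$, and $p\ge 1/n$ then forces $d_v\ge 2$.

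Part (c) is the main work. The lower bound $e(B,A)\ge |A|$ follows from the double-count $\sum_{v\in A}d_v = e(B,A)+e(J[A])$ combined with $\sum d_v\ge 2|A|$ (from (b)) and the obvious $e(B,A)\ge e(J[A])$. Equality forces $e(J[A])=|A|$, all $d_v=2$, and no $A$--$B$ edges, so $B=\emptyset$ and $J[A]$ is $2$-regular, i.e., a disjoint union of cycles. To rule out more than one cycle $C_1,\dots,C_r$ ($r\ge 2$), I would apply leading to each $Y_i=Z\setminus V(C_i)$ to obtain $(np)^{|V(C_i)|}<2|V(C_i)|$; multiplying these over $i$ and using $\aut(W,Z)\ge\prod_i 2|V(C_i)|$ forces $\mu_p(W,Z)<1$, contradicting the hypothesis. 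For $e(B,A)\ge|B|+1$, I would argue by contradiction: if $e(B,A)\le|B|$, then since each $b\in B$ has a neighbor in $A$, there are at least $|B|$ $A$--$B$ edges, which together with $e(J[A])\ge 0$ and $e(B,A) = e(J[A]) + (\text{number of }A\text{--}B\text{ edges})$ forces $e(J[A])=0$ and each $b\in B$ to have exactly one $A$-neighbor. Partition $A$ by $B$-neighborhood into classes $A_1,\dots,A_s$ of sizes $a_i$ with common $B$-neighborhoods $N_i$ of size $\ge 2$ (by (b)); then $\aut(W,Z)=\prod_i a_i!$ and $\mu_p(W,Z) = \prod_i(np^{|N_i|})^{a_i}/a_i!$. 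Applying leading to $Y=W\cup(A\setminus\{v\})$ for any $v\in A_i$ gives $np^{|N_i|}<1$, making every factor strictly less than $1$, so $\mu_p(W,Z)<1$, contradicting the hypothesis $\mu_p(W,Z)\ge 1$. I expect the multi-cycle step above to be the most delicate, since it's the one place where the leading constraints from distinct cycles have to combine multiplicatively against the full automorphism count of $[W, Z]$ in order to shrink $J[A]$ down to a single cycle.
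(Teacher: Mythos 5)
Your (a) and (b) match the paper's proof essentially line-for-line: the same application of $p$-leading to $Y=W\cup\{v\}$ for (a) and to $Y=Z\setminus\{v\}$ for (b), combined with the same automorphism-monotonicity observation.

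For (c) you take a genuinely different route. The paper's proof of (c) rests on a single auxiliary observation: if $[W,Z]$ is $p$-leading with $\mu_p(W,Z)\ge 1$, then $J[A]$ is connected (shown by splitting $\mu_p(W,Z)$ across components and applying leading to each $Z\setminus X_i$). With connectivity in hand, the rest is elementary edge-counting from (b): $e(J[A])\ge |A|-1$ and $e_{AB}\ge |B|$ push everything through, and the equality case $e(B,A)=|A|$ is forced to be the lone cycle. Your argument bypasses connectivity for the first inequality via the slick double-count $2e(B,A)\ge e(B,A)+e(J[A])=\sum d_v\ge 2|A|$, which is arguably cleaner than the paper's implicit case analysis and uses only (b). But you then pay for it: you need two separate $\mu$-arguments (one to kill the multi-cycle case, one for $e(B,A)\ge|B|+1$), each of which is essentially re-deriving a consequence of the disconnectedness that the paper rules out once and for all. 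Both of your $\mu$-arguments are correct — in particular the multi-cycle step you flagged as delicate does go through, since $\aut(W,Z)\ge\prod_i 2|V(C_i)|$ and each leading constraint bounds $(np)^{|V(C_i)|}$ individually — and in the $|B|+1$ case you could have noted that your $a_i$'s are all forced to equal $1$ (each $b\in B$ has a unique $A$-neighbor, so distinct $v\in A$ cannot share a $B$-neighbor), but the argument doesn't need this. Overall your proof is valid; the paper's is shorter because the connectivity observation does all the $\mu$-work at once.
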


\begin{proof}
    (a) This is trivial if $|A|=1$; otherwise, for any $v \in A$,
    \[\mu_p(W,Z)=\mu_p(W \cup \{v\},Z)np^{e(W,W\cup \{v\})}\frac{\aut(W \cup \{v\},Z)}{\aut(W,Z)}\le \mu_p(W \cup \{v\},Z) n<n,\]
where the equality is two applications of 
the second part of \eqref{mumu}, and
the final inequality holds since $[W,Z]$ is $p$-leading.

\mn
(b) If $|A|>1$ and $v \in A$ has degree at most $1$ in $[B,A]$, then
$\mu_p(Z\sm \{v\},Z)\geq np\ge 1$ contradicts the 
assumption that $[W,Z]$ is $p$-leading.

\mn
(c)
This follows from (b) 
and the observation that if $[W,Z]$ is $p$-leading with
$\mu_p(W,Z)\ge 1$, then $J[Z\sm W]$ is connected
(since otherwise, if $X_1\dots X_k$ are the vertex sets of 
components of $J[Z\sm W]$, then
$1\le \mu_p(W,Z)\le\prod \mu_p(Z\sm X_i,Z)<1$). 
\end{proof}

\mn

\subsection*{Reduction}
Our last job in this section is to show that in proving Theorem~\ref{MT},
we need only consider connected $F$.
We recall a familiar, elementary estimate:
\beq{elementary}(n)_a \ge (n/e)^a.\enq

\begin{prop}\label{Pconn}
With $p=e^2p'$, if 
$\, N(H,F) \leq \EE_{p'}X_F  \,$
for every connected $F$, then 
\beq{eq.MT'} 
N(H,F) \leq \EE_pX_F 
\enq
for every $F$.
\end{prop}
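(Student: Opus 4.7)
The plan is to decompose $F$ into its connected components and apply the hypothesis component-by-component. Write $F = \bigsqcup_j n_j C_j$, where the $C_j$ are the distinct isomorphism types of components appearing in $F$; then $\aut(F) = \prod_j n_j!\,\aut(C_j)^{n_j}$, $v_F = \sum_j n_j v_{C_j}$, and $e_F = \sum_j n_j e_{C_j}$. A copy of $F$ in $H$ is in particular, for each $j$, an unordered selection of $n_j$ copies of $C_j$ in $H$, so $N(H,F) \le \prod_j \binom{N(H,C_j)}{n_j} \le \prod_j N(H,C_j)^{n_j}/n_j!$. Plugging in the hypothesis $N(H,C_j) \le \EE_{p'}X_{C_j}$ and comparing with $\EE_p X_F=(n)_{v_F}p^{e_F}/\bigl(\prod_j n_j!\,\aut(C_j)^{n_j}\bigr)$, the automorphism and factorial factors cancel, leaving
\[
\frac{N(H,F)}{\EE_p X_F} \;\le\; \frac{\prod_j (n)_{v_{C_j}}^{n_j}}{(n)_{v_F}}\cdot \Bigl(\frac{p'}{p}\Bigr)^{e_F}.
\]

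Using \eqref{elementary} in the form $(n)_{v_F} \ge (n/e)^{v_F}$ together with the trivial $\prod_j (n)_{v_{C_j}}^{n_j} \le n^{v_F}$, the first factor is at most $e^{v_F}$; with $p/p' = e^2$ the second equals $e^{-2e_F}$. So it suffices to show $v_F \le 2 e_F$, which holds whenever every component of $F$ has at least two vertices, since a connected graph on $v\ge 2$ vertices has $e \ge v-1 \ge v/2$.

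The only remaining matter is isolated vertices. If $F \not\sub H$ there is nothing to prove, so I would write $F = F' \sqcup k\,K_1$ with $F'$ having no isolated vertices. A direct computation gives $N(H,F) = N(H,F')\binom{v_H - v_{F'}}{k}$ and $\EE_p X_F = \EE_p X_{F'}\binom{n - v_{F'}}{k}$; since the connected-case hypothesis applied to $K_1$ forces $v_H \le n$, we obtain $N(H,F)/\EE_p X_F \le N(H,F')/\EE_p X_{F'}$, reducing the claim to $F'$. The only point requiring any thought is this isolated-vertex reduction, since the inequality $v_F \le 2 e_F$ fails exactly when $F$ has a $K_1$-component; everything else is clean bookkeeping.
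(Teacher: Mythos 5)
Your proof is correct and takes essentially the same approach as the paper's: decompose $F$ into component isomorphism types with multiplicities, bound $N(H,F)$ by a product of binomial coefficients, apply the connected-case hypothesis, use $\aut(F)=\prod_j n_j!\aut(C_j)^{n_j}$ together with $(n)_a\ge(n/e)^a$ and $v_F\le 2e_F$, and then reduce out isolated vertices via the $\binom{v_H-v_{F'}}{|Y|}\le\binom{n-v_{F'}}{|Y|}$ identity. The one small point you make explicit that the paper leaves implicit is that $v_H\le n$ follows from the hypothesis applied to $F=K_1$.
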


\nin
\emph{Proof.}
This follows from the next two assertions.
\begin{claim}\label{Cconn}
If $\, N(H,F) \leq \EE_{p'}X_F  \,$
for every connected $F$, then 
\eqref{eq.MT'} holds for every $F$ without isolated vertices.
\end{claim}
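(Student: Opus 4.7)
The plan is to reduce to the connected case by decomposing $F$ into its components and applying the hypothesis componentwise, absorbing the small loss into the $e^2$ factor via the elementary bound \eqref{elementary}. Grouping the components by isomorphism type, write $F$ as a disjoint union consisting of $m_j$ copies of each distinct connected graph $G_j$ for $j\in [s]$, and note that, since $F$ has no isolated vertices, each $G_j$ has at least one edge and thus $v_{G_j}\le 2e_{G_j}$.

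First I would bound $N(H,F)$ from above by dropping the vertex-disjointness requirement among components. Working with labeled counts (edge-preserving injections) $\tilde N$, restriction to each component gives $\tilde N(H,F)\le \prod_j \tilde N(H,G_j)^{m_j}=\prod_j(\aut(G_j)N(H,G_j))^{m_j}$; combining with $\aut(F)=\prod_j m_j!\,\aut(G_j)^{m_j}$ yields
\[
N(H,F)\;\le\;\frac{\prod_j N(H,G_j)^{m_j}}{\prod_j m_j!}.
\]
Now apply the connected hypothesis $N(H,G_j)\le \EE_{p'}X_{G_j}=(n)_{v_{G_j}}(p')^{e_{G_j}}/\aut(G_j)$. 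Since $e_F=\sum_j m_je_{G_j}$, this gives
\[
N(H,F)\;\le\;\frac{(p')^{e_F}\prod_j(n)_{v_{G_j}}^{m_j}}{\aut(F)}.
\]

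The target is $\EE_pX_F=(n)_{v_F}p^{e_F}/\aut(F)=(n)_{v_F}(e^2p')^{e_F}/\aut(F)$, so the claim reduces (after cancelling $\aut(F)$ and $(p')^{e_F}$) to the purely numerical inequality
\[
\prod_j(n)_{v_{G_j}}^{m_j}\;\le\;e^{2e_F}\,(n)_{v_F}.
\]
This is the only remaining point, and it is routine: bound the left side by $\prod_j n^{m_jv_{G_j}}=n^{v_F}$, use \eqref{elementary} to get $(n)_{v_F}\ge (n/e)^{v_F}$ on the right, and conclude via $v_F\le 2e_F$ (the no-isolated-vertex observation). I do not expect any real obstacle here; the constant $e^2$ in the statement is calibrated precisely to absorb the discrepancy between $(n)_a$ and $n^a$, and the fact that $F$ has no isolated vertices is exactly what makes $v_F\le 2e_F$ — and hence the doubling $p=e^2p'$ — sufficient.
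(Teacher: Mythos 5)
Your proof is correct and takes essentially the same route as the paper: the paper's first step uses $N(H,F)\le\prod_i\binom{N(H,F_i)}{m_i}\le\prod_i N(H,F_i)^{m_i}/m_i!$ where you pass through labeled counts, but both land on the same intermediate bound, apply the connected hypothesis componentwise, and close with the same numerical estimate $\prod_j(n)_{v_{G_j}}^{m_j}\le n^{v_F}\le e^{v_F}(n)_{v_F}$ together with $v_F\le 2e_F$.
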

\begin{claim}\label{Ciso}
If \eqref{eq.MT'} holds for every $F$ without isolated vertices, then 
it holds for every $F$.
\end{claim}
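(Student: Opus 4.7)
The plan is to decompose $F$ into its non-isolated subgraph $F'$ and a set $I$ of $t := v_F - v_{F'}$ isolated vertices, and then reduce the bound for $F$ to the bound for $F'$ supplied by the hypothesis (applicable because $F'$ has no isolated vertices by construction).

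The reduction rests on two parallel identities:
\[
N(H,F) \;=\; N(H,F')\binom{v_H - v_{F'}}{t}
\qquad\text{and}\qquad
\EE_pX_F \;=\; \EE_pX_{F'}\binom{n - v_{F'}}{t}.
\]
The first I would prove by a simple bijection: any copy of $F$ in $H$ is uniquely determined by the copy of $F'$ it induces on its positive-degree vertices, together with a choice of $t$ additional vertices of $V(H)$ (outside the $F'$-copy) that serve as the isolated vertices of $F$. The second follows from the formula $\EE_pX_F = (n)_{v_F}p^{e_F}/\aut(F)$, combined with $e_F = e_{F'}$, the factorization $(n)_{v_F} = (n)_{v_{F'}}(n - v_{F'})_t$, and $\aut(F) = \aut(F')\cdot t!$ (any automorphism of $F$ permutes the $t$ isolated vertices freely and acts on the non-isolated part by an automorphism of $F'$).

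Combining the two identities with the hypothesis $N(H,F') \le \EE_pX_{F'}$, the claim reduces to the inequality $\binom{v_H - v_{F'}}{t} \le \binom{n - v_{F'}}{t}$, which holds under the standing assumption $v_H \le n$ (the only case of interest, since otherwise $\EE_pX_H = 0$ while $N(H,H) = 1$). I do not expect any real obstacle here; this is a bookkeeping reduction whose content is entirely in the two product identities, and the substantive half of Proposition~\ref{Pconn} sits in Claim~\ref{Cconn}.
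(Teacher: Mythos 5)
Your proposal is correct and is essentially the paper's proof, just written out in slightly more detail: the paper states the same two counting identities in a single display, $N(H,F)=N(H,F')\binom{v_H-v_{F'}}{|Y|}\le \EE_pX_{F'}\binom{n-v_{F'}}{|Y|}=\EE_pX_F$, with the middle inequality using both the hypothesis on $F'$ and (implicitly) $v_H\le n$ exactly as you note.
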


\begin{subproof}[Proof of Claim~\ref{Cconn}.] 
Supposing $F$ is disconnected without isolates, let 
$F_1\dots F_s$ be the isomorphism types of components of $F$, with $m_i$ the number of components isomorphic to $F_i$, 
and set $v_i=v(F_i)$, $e_i=e(F_i)$ and $\aut_i=\aut(F_i)$.
Then

\[\begin{split}N(H,F) \le \prod_i \binom{N(H,F_{i})}{m_i} \le \prod_i \frac{N(H,F_{i})^{m_i}}{m_i!} &\le \prod_i \frac{(\EE_{p'}F_{i})^{m_i}}{m_i!}\\
&=\prod_i \frac{{\left[(n)_{v_i} (p')^{e_i}/\aut_i\right]}^{m_i}}{m_i!} 
\le \frac{(n)_{v_F}e^{v_F}(p')^{e_F}}{\aut(F)}\le\EE_pX_F.
\end{split}
\]
Here the fourth inequality uses 
$\aut(F)=\prod_i [\aut_i^{m_i}\cdot m_i!]\,$
and $\,\prod_i\left[(n)_{v_i}\right]^{m_i} \le n^{v_F} \le e^{v_F}(n)_{v_F}$ (see \eqref{elementary}), and the last
holds since $F$ isolate-free implies $v_F\leq 2e_F$.
\end{subproof}

\begin{subproof}[Proof of Claim~\ref{Ciso}.] 
Let $Y\neq \0$ be the set of isolated vertices of $F$, 
and $F'=F-Y$. 
Then, since $F'$ satisfies \eqref{eq.MT'},
    \[N(H,F)=N(H,F'){{v_H - v_{F'}} \choose |Y|}\le \EE_pX_{F'}{{n-v_{F'}} \choose |Y|}=\EE_pX_F.\qedhere\]
\end{subproof}

\mn

This completes the proof of Proposition~\ref{Pconn}.\qed

\section{Proof of \Cref{MT} for ``large" $q$}\label{sec.MT}

The rest of the paper is devoted to the proof of \Cref{MT}.
This section and the next
prove the first part of
the theorem (except for very small $q$; see \eqref{q'}), 
with the second part (a stronger statement for smaller $q$) treated in \Cref{sec.MT.sparse}.
So we are now given a $q$-sparse $H$ 
and 
should show that the inequality in
\eqref{eq.MT} holds for a given $F$, 
provided $p=K q\log^2n$ with 
a large enough (fixed) $K$.
We may assume $F$ is connected
(by Proposition~\ref{Pconn}), and
that 
$v_F \ge 1$ and $F \sub H$.
(We may of course also assume $p<1$, since 
otherwise \Cref{CpEpE*'} is trivial.)

\mn

We follow the sketch of \Cref{sec.prelim},
setting 
\beq{CC}
\CC = 16\cdot 20
\enq
(again, we do not try
for good constants),
\[
\mbox{$q':= \CC q\log n$, $\,p':=eq'\log n,\,$ and
$\,p=e^2p'$ ($=O(q\log^2 n))$,}
\]
and assuming 
\beq{q'}
q'\ge 1/n.
\enq
(This is the minor restriction on $q$ mentioned above,
which we use to allow application of 
\Cref{prop.small claims}(b,c)
in the proof of \Cref{F.gen.lem}.
The assumption could presumably be avoided, but there 
seems little point to this since 
the stronger result of \Cref{sec.MT.sparse} 
covers $q$ up to some $\gO(1/n).$)

Using \Cref{lem.leading seq}, we fix a sequence
\[
\emptyset = W_0 \subsetneq W_1 \subsetneq \cdots \subsetneq W_k=V(F)
\]
such that $[W_{i-1}, W_i]$ is $q'$-leading and 
$\mu_{q'}(W_{i-1}, W_i) \ge 1$ for each $i \in [k].$

Recall from Section~\ref{sec.prelim} that 
a \emph{labeled copy of $F$ in $H$} is an edge-preserving
injection from $V(F) $ to $ V(H)$.
Using $\tilde N(H,F)$ for the number of such copies,
we have
\beq{tilde.N.bd}  
N(H,F)\cdot \aut(F)=
\tilde N(H,F) \le \prod_{i=1}^k \max_U \tilde \tau_{[W_{i-1}, W_i]}(U,H),\enq
and will show that, for any $i \in [k]$ and $U \sub V(H)$,
\beq{tau.ub} \tilde \tau_{[W_{i-1},W_i]}(U,H) \le \tilde \mu_{p'}(W_{i-1},W_i).
\enq
This is enough for Theorem~\ref{MT}:  By \eqref{tau.ub}, the right-hand side of \eqref{tilde.N.bd} is at most
\[ \prod_{i=1}^k \tilde \mu_{p'}(W_{i-1},W_i) = 
\tilde \mu_{p'}(F)=\mu_{p'}(F)\cdot \aut(F),\]
so $N(H,F) \le \mu_{p'}(F)$. But,
using \eqref{elementary} (for the first inequality)
and $v_F\leq 2e_F$ (for the second), 
\[
\mu_{p'}(F)=\EE_{p'}X_F\cdot \frac{n^{v(F)}}{(n)_{v(F)}}\le \EE_{p'}X_F e^{v(F)} \le \EE_{p'}X_F e^{2e(F)} = \EE_pX_F;
\]
so we have \Cref{MT}.

\bn

In the rest of this section we prove \eqref{tau.ub}
modulo the proof of \Cref{lem.tree.hard}, which is 
postponed to \Cref{sec.lem.tree.hard}. 
Throughout the discussion we fix $i$,
set $A=W_i \setminus W_{i-1}$ and 
$B=N(A) \cap W_{i-1}$, and let $X$ be the subset of 
$U$ corresponding to $B$ (under the implicit orderings
of $U$ and $W_{i-1}$).
Then
\[
\tilde \tau_{[W_{i-1},W_i]}(U,H) \leq  \tilde \tau_{[B,A]}(X,H)=\tau_{[B,A]}(X,H) \cdot \aut(B,A).
\]
So, since 
$\tilde \mu_{p'}(W_{i-1}, W_i)=\tilde \mu_{p'}(B,A)=\mu_{p'}(B,A) \cdot \aut(B,A)$,
we will have \eqref{tau.ub} if we show
\beq{tau.ub'} 
\tau_{[B,A]}(X,H) \le \mu_{p'}(B,A).
\enq

\mn

This will follow easily from the next two lemmas.

\begin{lem}\label{F.fixed.lem1}
The maximum size of a collection of edge-disjoint copies of $[B,A]$ on $X$ in $H$ is at most
\[    
\max\{e\mu_{2q}(B,A), \log n\}.
\]    
\end{lem}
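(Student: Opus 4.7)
The plan is to apply the $q$-sparseness of $H$ to the subgraph $T \sub H$ formed by the union of the $m$ supposedly edge-disjoint copies. Choose representative extensions $\psi_1,\dots,\psi_m$, let $V' = \bigcup_j \psi_j(A) \sub V(H) \sm X$, and let $T$ be the subgraph of $H$ on $X \cup V'$ with edges $\bigcup_j E(\psi_j)$. Edge-disjointness gives $e(T) = m\cdot e(B,A)$ and $v(T) \le |B|+m|A|$. Since $T \sub H$ and $H$ is $q$-sparse,
\[
(n)_{v(T)}\, q^{e(T)} \;\ge\; \aut(T).
\]

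First I would handle the generic case in which the $\psi_j$ are pairwise vertex-disjoint on $A$ (automatic when $|A|=1$, since there edge-disjointness forces distinct $A$-images). In this case $T$ is the rooted sum of $m$ isomorphic copies of $[B,A]$ glued along $B$, so $\Aut(T)$ contains a copy of $S_m \ltimes \Aut(B,A)^m$ coming from permuting the $m$ copies and acting internally, giving $\aut(T) \ge m!\cdot\aut(B,A)^m$. Feeding this into the sparseness inequality, using $n^{|A|}q^{e(B,A)} = \mu_q(B,A)\cdot\aut(B,A)$ and $(n)_v \le n^v$, the $\aut(B,A)^m$ factors cancel and I arrive at $n^{|B|}\,\mu_q(B,A)^m \ge m!$. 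Combined with $m!\ge (m/e)^m$ this rearranges to
\[
m \;\le\; e\,\mu_q(B,A)\cdot n^{|B|/m}.
\]
If $m\le\log n$ there is nothing to prove. Otherwise $n^{|B|/m}\le n^{|B|/\log n} = 2^{|B|}$, and because $[B,A]$ is $q'$-leading with $\mu_{q'}(B,A)\ge 1$ in the enclosing argument, \Cref{prop.small claims}(c) (or the immediate identity $e(B,A)=|B|$ when $|A|=1$) gives $e(B,A)\ge |B|$. Hence $2^{|B|}\le 2^{e(B,A)}$, and the identity $\mu_{2q}(B,A) = 2^{e(B,A)}\mu_q(B,A)$ delivers $m \le e\,\mu_{2q}(B,A)$, as required.

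The one non-mechanical step is the bound $\aut(T) \ge m!\cdot\aut(B,A)^m$ when some pair of copies shares a vertex in $A$: then $v(T)$ drops but arbitrary permutations of the $m$ copies need no longer descend to graph automorphisms of $T$. My expectation is that overlap is \emph{helpful} for the bound --- the resulting $T$ is strictly denser, so $q$-sparseness is more restrictive --- and the cleanest rigorous route will be either to pass to a maximal sub-collection of copies pairwise vertex-disjoint on $A$ (absorbing the loss into the universal constants, which we never try to optimize) or to apply the sparseness inequality more carefully, tracking how the drop in $v(T)$ is compensated by a corresponding drop in the permutation part of $\Aut(T)$. This bookkeeping is where I expect the main technical difficulty of the lemma to lie.
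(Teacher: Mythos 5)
Your overall plan---apply $q$-sparseness of $H$ to the union $R$ of the $\nu$ edge-disjoint copies---is the same as the paper's, and your closing algebra (the conversion $n^{|B|/\nu}\le 2^{|B|}\le 2^{e(B,A)}$ for $\nu>\log n$, turning $\mu_q$ into $\mu_{2q}$) matches the paper's in spirit. But the central step differs, and your version has a genuine gap, which you have correctly flagged but not closed. The bound $\aut(R)\ge \nu!\,\aut(B,A)^\nu$ is valid only when the copies are pairwise vertex-disjoint on the $A$-side; when they overlap, the permutation action of $S_\nu$ need not descend to $\Aut(R)$, and neither of your proposed fixes clearly works. Passing to a maximal $A$-vertex-disjoint subfamily can lose a non-constant factor (e.g.\ if every copy shares one fixed $A$-vertex, the maximal subfamily has size $1$), so this loss cannot be ``absorbed into universal constants.'' And the heuristic that ``overlap only helps because $R$ is denser'' is not quite the right intuition: what one would actually need to show is that each vertex-coincidence, which costs a factor of $n$ in the crude bound $(n)_{v_R}\le n^{v_R}$, is compensated by a drop of at most $n$ in $\aut(R)$ relative to $\nu!\,\aut(B,A)^\nu$; this is exactly the bookkeeping you (rightly) identify as the hard part, and it is not obvious.

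The paper sidesteps $\aut(R)$ entirely. Instead of writing $\EE_q X_R = (n)_{v_R}q^{e_R}/\aut(R)$ and trying to control the denominator, it bounds $\EE_q X_R$ by a direct count: every copy of $R$ in $G_{n,q}$ determines at least one unordered $\nu$-set of edge-disjoint copies of $[B,A]$ rooted at some ordered $|B|$-set $X'\subset[n]$. The expected number of such $\nu$-sets is at most $(n)_{|B|}$ (choices of $X'$) times $\bigl(n^{|A|}/\aut(B,A)\bigr)^\nu/\nu!$ (unordered $\nu$-tuples of potential copies on a fixed $X'$) times $q^{\nu e(B,A)}$ (probability a given edge-disjoint $\nu$-tuple is present), which is exactly $n^{|B|}\mu_q(B,A)^\nu/\nu!$, with no assumption on how the copies overlap. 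From there one argues as you do. So the fix for your proof is to replace the automorphism lower bound with this counting inequality; the rest of your argument stands.
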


Define an \textit{$(X, [B,A])$-component} of $H$ to be a union of copies of $[B,A]$ on $X$ (in $H$) that cannot be expressed as an edge-disjoint union of two smaller such unions and is maximal with this property.
Since $H,B,A,$ and $X$ are now fixed, 
we henceforth use \emph{component} for 
``$(X, [B,A])$-component of $H$."

\begin{lem}\label{F.gen.lem}
If $e(B,A) \ne 0$, then no component
contains more than 
$(e\log n)^{e(B,A)-1}$ copies of $[B,A]$ on $X$.
\end{lem}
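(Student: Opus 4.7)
My plan is to argue by contradiction, taking \Cref{lem.tree.hard} as a black box. Suppose some $(X,[B,A])$-component $\mathcal{K}$ of $H$ contains $N > (e\log n)^{e(B,A)-1}$ copies of $[B,A]$ on $X$. I would exhibit a subgraph $T \sub H$ with $\EE_q X_T < 1$, contradicting the $q$-sparseness of $H$. The subgraph $T$ will be the union of the $N$ copies, taken in an order that exposes the component's connectivity.

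To extract this order, start with any copy $\psi_1 \in \mathcal{K}$ and iteratively pick $\psi_{j+1}$ sharing at least one edge with $T_j := \bigcup_{r \le j} \psi_r$. Such a $\psi_{j+1}$ must exist for every $j < N$: otherwise the copies outside $T_j$ would be edge-disjoint from those inside, and $\mathcal{K}$ would split as an edge-disjoint union of two smaller non-empty sub-unions of copies on $X$, contradicting the indecomposability clause in the definition of component. Set $T := T_N$, viewed as an unrooted subgraph of $H$.

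The reason this ordering feeds into \Cref{lem.tree.hard} is that at each step the vertex overlap $\psi_{j+1}(B \cup A) \cap V(T_j)$ strictly contains $X$: every edge of a copy of $[B,A]$ has at least one endpoint in $\psi_{j+1}(A) \sub V(H) \sm X$ (there are no edges inside $B$), so any shared edge drags in a vertex of $V(T_j) \sm X$. Pulling back through $\psi_{j+1}$, each increment corresponds to a rooted type $[Y_{j+1}, B\cup A]$ with $B \subsetneq Y_{j+1} \subseteq B \cup A$, placing us in precisely the regime where the $q'$-leading property of $[W_{i-1}, W_i]$ supplies the bound $\mu_{q'}(\cdot, W_i) < 1$. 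Invoking \Cref{lem.tree.hard} on the sequence $(\psi_j)$ then yields $\EE_q X_T < 1$ as soon as $N > (e\log n)^{e(B,A)-1}$: heuristically, each new copy contributes at most a factor of $e\log n$ per new edge it introduces, and the first copy alone fixes $e(B,A)$ edges, matching the shape of the exponent.

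The main obstacle sits inside \Cref{lem.tree.hard}, where one must combine the $q'$-leading estimates with careful control of the automorphism counts $\aut(Y_{j+1}, B \cup A)$ so that the cumulative bound on $\EE_q X_T$ does drop below $1$. For \Cref{F.gen.lem} itself, the content specific to this lemma is only the greedy ordering (from edge-indecomposability) and the observation that any shared edge forces the overlap to strictly extend $B$; both are clean consequences of the definitions.
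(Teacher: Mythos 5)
Your greedy ordering and the observation that each shared edge drags in a vertex of $V(T_j)\setminus X$ are correct, and they correctly identify components with $[B,A]$-trees in the sense of \Cref{def.tree}. But the final step---``invoking \Cref{lem.tree.hard} on the sequence $(\psi_j)$ then yields $\EE_q X_T < 1$ as soon as $N > (e\log n)^{e(B,A)-1}$''---does not work, and this is where the real content of the lemma lives. \Cref{lem.tree.hard} bounds $\tilde\mu_r(T[W,V(T)])$ \emph{in terms of $e_T$}, the number of edges in the union, not in terms of the number of copies $N$. These are very different quantities: a graph $T$ with a modest number of edges can contain exponentially many copies of $[B,A]$ on $X$ packed into it with heavy overlap. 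Nothing in the greedy extraction tethers $N$ to $e_T$, because a copy $\psi_{j+1}$ can share \emph{all} of its edges with $T_j$ (contribute zero new edges) and still satisfy your selection rule; and in any case the copies you count need not coincide with the copies you used to build the tree. So the contradiction you hope to derive never materializes from the sparseness inequality alone.

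The paper's proof is genuinely two-stage, and you are missing the second stage. Stage one is essentially what you had in mind, but used to bound $e_T$ rather than $N$: combining $1 \le \EE_q X_T \le n^{|B|}\mu_q(T[B,V(T)])$, the rescaling $\mu_q = \mu_{q'}\cdot(\CC\log n)^{-e_T}$, \Cref{lem.tree.hard}, and \Cref{prop.small claims}(a) yields $e_T \le \gamma\max\{|B|,1\}\log n$ with $\gamma = 2/\log(1/\delta)$. Stage two is a purely combinatorial count of copies of $[B,A]$ inside a graph on a bounded edge set: the naive bound $\tau_{[B,A]}(X,T) \le \binom{e_T}{e(B,A)} \le (e\gamma\log n)^{e(B,A)}$, which suffices when $e(B,A) > \log\log n$, and a sharper specification for small $e(B,A)$ in which one first chooses a copy $\cop{I}$ of $I = E([B,A])\setminus\{xy\}$ (for a carefully chosen edge $xy$ via \Cref{prop.small claims}(b)) and then recovers $\cop{x},\cop{y}$, saving the factor of $e\log n$ that gets the exponent down from $e(B,A)$ to $e(B,A)-1$. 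Without that second stage, your argument cannot produce any bound on the number of copies per component, let alone the stated one.
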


\begin{proof}[Proof of \eqref{tau.ub'}] We may assume $e(B,A) \ne 0$, since otherwise \eqref{tau.ub'} is trivial. 
Combining Lemmas~\ref{F.fixed.lem1} and Lemma~\ref{F.gen.lem}
(and the fact that the quantity in Lemma~\ref{F.fixed.lem1}
bounds the number of components) then gives
\begin{eqnarray*}
\tau_{[B,A]}(X,H) &\le &\max\{e\mu_{2q}(B,A), \log n\}\cdot (e\log n)^{e(B,A)-1}\\
&\le &\mu_{q'}(B,A)(e\log n)^{e(B,A)} 
\,\,=\,\,\mu_{p'}(B,A),
\end{eqnarray*}
where the second inequality uses 
$\mu_{q'}(B,A)= \mu_{q'}(W_{i-1},W_i)\ge 1$ 
(and, weakly, the definition of $q'$).
\end{proof}

\begin{proof}[Proof of \Cref{F.fixed.lem1}]
If there are $\nu$ edge-disjoint copies of $[B,A]$ on $X$ in $H$, with $R$ their union, then (using 
the trivial $e(B,A)\geq |B|$ for the 
final inequality)
    \[\begin{split}\EE_qX_R \le n^{|B|} \frac{\left(\mu_q(B,A)\right)^\nu}{\nu!} \le n^{|B|} \left(\frac{e\mu_q(B,A)}{\nu}\right)^\nu &=n^{|B|}2^{-\nu e(B,A)}\left(\frac{e\mu_{2q}(B,A)}{\nu}\right)^\nu\\
    &
\le (n2^{-\nu})^{|B|}\left(\frac{e\mu_{2q}(B,A)}{\nu}\right)^\nu.\end{split}\]
But $\EE_qX_R\geq 1$
(since $H$ is $q$-sparse and $R\sub H$), 
so we must have $\nu\le \max\{e\mu_{2q}(B,A), \log n\}.$
\end{proof}

For the proof of \Cref{F.gen.lem} we view components
as ``trees," as follows.
\nin
(Here $[W,Z]$ is general.)

\begin{mydef}\label{def.tree} 
A \textit{$[W,Z]$-tree} is $T=\cup_{i \ge 0} T_i$, 
where
\[     
\text{each $T_i$ is a copy of $[W,Z]$ on $W$, 
and $\emptyset \neq E(T_i) \cap (\cup_{j <i} E(T_j)) \ne E(T_i)$ for each $i \ge 1$.}
\]      
\end{mydef}
\nin
We also view this as a rooted graph, $T[W,V(T)]$,
and in what follows use $e_T$ for $e(W,V(T))$.
Notice that in our situation the components are
$[B,A]$-trees on $X$.

\begin{proof}[Proof of \Cref{F.gen.lem}] If $|A|=1$, 
then each component consists of a single copy of $[B,A]$,
and there is nothing to show; 
so we assume $|A| \ge 2$, and note that then 
\Cref{prop.small claims}(c) gives $e(B,A) \ge 3$
(here we use \eqref{q'} and 
$\mu_{q'}(B,A)=\mu_{q'}(W_{i-1},W_i)\geq 1)$.

Our first (and main) task is to give an upper bound 
(see \eqref{compsize}) on the sizes of the components;
that is, on $e_T$ for a $[B,A]$-tree $T$ 
that can appear 
in $H$.
For such a $T = \cup T_i$ (as in Definition~\ref{def.tree}), 
\beq{1EEq}
1\le \EE_qX_T \le n^{|B|} \mu_q(T[B,V(T)])
\enq
(the first inequality holding since $H$ is $q$-sparse); so
it is enough to show that for too large an $e_T$ 
the right-hand side of \eqref{1EEq} is less than 1.
This is mostly based on the next lemma,
which is proved in \Cref{sec.lem.tree.hard}.

\begin{lem}\label{lem.tree.hard}
Let $J$ be a graph on $Z$ and $W \subsetneq Z$. 
If $[W,Z]$ is $\rr$-leading and $J$ is $(\rr/2)$-sparse, 
then for any $[W,Z]$-tree $T$,
\beq{lem.tree.hard.stronger} 
\tilde \mu_\rr(T[W,V(T)]) \le \mu_\rr(W,Z) \cdot \min\{v(W,Z), 16\log n\}^{e_T}.
\enq   
\end{lem}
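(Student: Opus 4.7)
The plan is by induction on the number $\ell+1$ of pieces in $T=T_0\cup\cdots\cup T_\ell$, with $T^{(i)}:=T_0\cup\cdots\cup T_i$ and $M:=\min\{v(W,Z),\,16\log n\}$. The whole argument reduces to the following auxiliary automorphism bound, which I shall call $(\ast)$:
\[
\aut(Y,Z)\;\le\;M^{e(Y,Z)}\qquad\text{whenever }W\subseteq Y\subsetneq Z.
\]
The base case $T=T_0$ is immediate, since $\tilde\mu_r(T[W,V(T)])=\tilde\mu_r(W,Z)=\mu_r(W,Z)\,\aut(W,Z)$, which is bounded by $\mu_r(W,Z)\,M^{e(W,Z)}$ by applying $(\ast)$ with $Y=W$.

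For the inductive step, consider adding $T_{i+1}$ to $T^{(i)}$. Let $S_i\subseteq Z$ be the preimage under the defining isomorphism $[W,Z]\cong T_{i+1}$ of the shared vertex set $V(T_{i+1})\cap V(T^{(i)})$. First, $S_i\neq W$: every edge of $[W,Z]$ has at least one endpoint in $Z\setminus W$, so $S_i=W$ would force $E(T_{i+1})\cap E(T^{(i)})=\emptyset$, contradicting \Cref{def.tree}. If $S_i=Z$ then $T_{i+1}$ adds no new vertices, and the multiplicative change to $\tilde\mu_r$ is $r^{\Delta e_i}\le 1\le M^{\Delta e_i}$. Otherwise $W\subsetneq S_i\subsetneq Z$: every edge of $T_{i+1}$ with an endpoint in $Z\setminus S_i$ is genuinely new, so $\Delta e_i\ge e(S_i,Z)$, and using $r\le 1$ together with the $r$-leading property of $[S_i,Z]$ (inherited from $[W,Z]$),
\[
n^{v(S_i,Z)}\,r^{\Delta e_i}\;\le\;\tilde\mu_r(S_i,Z)\;=\;\mu_r(S_i,Z)\,\aut(S_i,Z)\;<\;\aut(S_i,Z).
\]
Applying $(\ast)$ with $Y=S_i$ then bounds this multiplicative factor by $M^{e(S_i,Z)}\le M^{\Delta e_i}$, completing the inductive step.

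The real work lies in proving $(\ast)$, which I would split into two cases according to which term of the minimum dominates. When $M=v(W,Z)$ (that is, $v(W,Z)\le 16\log n$), the elementary estimate $\aut(Y,Z)\le v(Y,Z)!\le v(Y,Z)^{v(Y,Z)}\le v(W,Z)^{v(Y,Z)}$, combined with $e(Y,Z)\ge v(Y,Z)$ (obtained by applying \Cref{prop.small claims}(b) to the inherited $r$-leading rooted graph $[Y,Z]$ and a short double-count of the degrees of vertices in $Z\setminus Y$), yields $(\ast)$ directly. When $M=16\log n$, I would invoke the sparseness-sunflower machinery promised in the sketch: $(r/2)$-sparseness of $J$, via \Cref{lem.no sunflower}, rules out $k$-sunflowers in $J$ of size much exceeding $\log n$, hence in every rooted subgraph $[Y,Z]$; and \Cref{lem.sunflower} then converts this into $\aut(Y,Z)\le O(\log n)^{e(Y,Z)}\le (16\log n)^{e(Y,Z)}$.

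I expect the main obstacle to be the second case of $(\ast)$. The elementary case and the inductive bookkeeping ($\Delta e_i\ge e(S_i,Z)$ and the reduction to $(\ast)$) are routine once set up, but the sunflower case demands careful tracking of constants through the sparseness-to-sunflower and sunflower-to-automorphism translations, and verifying that \Cref{lem.sunflower} applies uniformly to every rooted subgraph $[Y,Z]$ rather than merely to $J$ itself.
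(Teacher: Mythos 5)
Your proposal matches the paper's argument almost exactly. The inductive bookkeeping you describe (peeling off one $T_i$ at a time, sacrificing any new edges inside the overlap, and reducing the multiplicative increment to $\tilde\mu_r(S_i,Z)=\mu_r(S_i,Z)\aut(S_i,Z)\le\aut(S_i,Z)$ via the $r$-leading hypothesis) is precisely the telescoping inequality \eqref{mu.tilde.reform} in the paper, just phrased as an explicit induction rather than a single product. Your $(\ast)$ is the paper's Corollary~\ref{lem.aut.ub}, and your treatment of the $16\log n$ branch (Lemma~\ref{lem.no sunflower} rules out a $(\log n)$-sunflower, and Lemma~\ref{lem.sunflower} then bounds $\aut$) is the same.

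The one place where you diverge, and where your argument as written has a small gap, is the $M=v(W,Z)$ branch of $(\ast)$. You derive $e(Y,Z)\ge v(Y,Z)$ by invoking \Cref{prop.small claims}(b), but that proposition assumes $p\ge 1/n$, an assumption that does not appear in the statement of \Cref{lem.tree.hard}. (It also does not cover $v(Y,Z)=1$, although that case is trivial.) The paper sidesteps this by proving an unconditional elementary fact — if $Z\setminus W$ has no isolated vertex then $\aut(W,Z)\le v(W,Z)^{e(W,Z)}$ — and this bound holds even when $e(Y,Z)<v(Y,Z)$ (e.g.\ when $J[Z\setminus W]$ is a forest). Since the paper's applications of the lemma do enforce $q'\ge 1/n$, your argument would go through after adding that hypothesis, but the cleaner route is the short case analysis (matching/tree components vs.\ the rest) that the paper uses for its auxiliary proposition.
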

\nin
We believe that \eqref{lem.tree.hard.stronger} 
could be strengthened to 
\beq{improve3.4}
\mu_\rr(T[W,V(T)]) \le \mu_\rr(W,Z)K^{e_T}
\enq
for some constant $K$; 
as mentioned in Section~\ref{sec.prelim}, this
would remove a log factor in Theorem~\ref{MT}.
We also suspect that both Lemma~\ref{lem.tree.hard}
and the conjectured improvement
hold even without the sparseness 
assumption.

\mn

For now assuming \Cref{lem.tree.hard},
we return to bounding $e_T$ for the $T$ of \eqref{1EEq}, setting
\[
\gd=\min\{v(B,A), 16\log n\}/(\CC\log n)
\]
and 
\[
\gamma=2/\log (1/\delta) 
\,\,\,\, (\leq 2/\log (20); \,\text{see}\, \eqref{CC}).
\]

Noting that $[B,A]$ is $q'$-leading (since 
$[W_{i-1},W_i]$ is), and $F[A\cup B]$
is ($q'/2$)-sparse (since $H$ is $q$-sparse
and contains $F$),
we may apply \Cref{lem.tree.hard} to obtain
\[
\mu_{q'}(T[B,V(T)]) \,\,(\le \tilde \mu_{q'}(T[B,V(T)]))\,\,
\le \,\mu_{q'}(B,A)\cdot(\CC\gd\log n)^{e_T},
\]
whence the right-hand side of \eqref{1EEq} is 
\[
n^{|B|}\mu_{q'}(T[B,V(T)])\cdot (\CC\log n)^{-e_T}\le n^{|B|}\mu_{q'}(B,A)\delta^{e_T}.
\]

\nin 
This gives 
\beq{compsize}
e_T\le \gamma\cdot\max\{|B|,1\}\log n,
\enq
since otherwise $n^{|B|}\mu_{q'}(B,A)\gd^{e_T} 
< n \cdot n^{|B|}n^{-2\max\{|B|,1\}}\le 1,$
where the first inequality uses \Cref{prop.small claims}(a).

\mn

Using \eqref{compsize} (and, say, 
$e(B,A) \ge \max\{|B|,1\}$), we have the naive bound
\[\tau_{[B,A]}(X,T) \le {e_T \choose e(B,A)} \le (e\gamma\log n)^{e(B,A)},\]
which immediately gives \Cref{F.gen.lem} if 
(say) $e(B,A) >\log\log n$. 
To account for the extra $e\log n$ in case
\beq{small.eba} (3 \le )\,\,e(B,A)\le \log \log n, \enq
we may argue more carefully as follows. 

\mn

Here we are always talking about copies of $[B,A]$ 
\emph{on $X$ in $T$}, so will usually omit ``on $X$ in $T$."
We extend ``copy" to vertices and edges (or sets of edges)
in the natural way, and use overline for copies.

\mn

The combination of \eqref{small.eba} and 
Proposition~\ref{prop.small claims}(c) gives
$\gd = O(\log\log n/\log n)$ and
\beq{gamma.ub}
\gamma=O(1/\log\log n).
\enq
Recalling from \Cref{prop.small claims}(b) that vertices of $A$ have degree at least 2
(in $[B,A]$), fix $x \in A$ of minimum degree and some neighbor $y$ of $x$ in $A \cup B$,
and let $I=E([B,A]) \setminus \{xy\}$. 
We may specify a copy of $[B,A]$ 
via the following two steps.

\mn 
(a) Choose a copy $\cop{I}$ of $I$; the number of ways to do
this is (very crudely) at most 
$\C{e_T}{e(B,A)-1}.$

\mn
Notice that 
$\cop{I}$ determines either
(i) the pair $\{\cop{x}, \cop{y}\}$, 
or (ii) $\cop{x}$ itself, since:  if $y$ is another 
minimum degree vertex of $A$,
then $\cop{x}$ and $\cop{y}$ are the 
(only) two minimum degree vertices of $V(\cop{I})\sm X$;
and otherwise $\cop{x}$ is the unique such vertex.

\mn
(b) Choose $\cop{x}$ and $\cop{y}$. 
Here: if (i) above holds, then the number of 
possibilities is at most 2; and if (ii) holds,
then we know $\cop{x}$ and are choosing $\cop{y}$
from $X$ if $y\in B$,
and from $V(\cop{I}) \setminus (X \cup \{\cop{x}\})$
if $y\in A$, so the number of possibilities is at most
$
\max\{|A|, |B|\} \le e(B,A)
$
(see Proposition~\ref{prop.small claims}(c)).

\mn

In sum, the number of copies of $[B,A]$ (on $X$ in $T$) is at most
\[
{e_T \choose e(B,A)-1} e(B,A)  \le
\left\{\begin{array}{ll}
(e\gamma \log n)^{e(B,A)-1}\log\log n&\mbox{if $B\neq \0$},\\
(e\gamma \log n/2)^{e(B,A)-1}\log\log n&\mbox{if $B= \0$}
\end{array}\right.
\]
(where the inequality uses 
\eqref{compsize}, Proposition~\ref{prop.small claims}(c)
(when $B\neq \0$), and 
\eqref{small.eba}); 
and, in view of \eqref{gamma.ub} and the fact that 
$e(B,A) \ge 3$, this is less than the 
$(e\log n)^{e(B,A)-1}$ of
\Cref{F.gen.lem}.
\end{proof}

\section{Proof of \Cref{lem.tree.hard}}\label{sec.lem.tree.hard}

Let $T=\cup_{i \ge 0} T_i$ as in \Cref{def.tree}, and
set $Z_i=V(T_i)$,
$Y_i=Z_i \cap (\cup_{j<i}Z_i)$ 
($\supsetneq W$), $v_i=v(Y_i,Z_i)$, and
$e_i=e(Y_i,Z_i)$ (note this sacrifices edges of $T_i$
that are contained in $Y_i$ but not in any earlier $T_j$).    
Then
    \beq{mu.tilde.reform} 
    \begin{split} \tilde \mu_\rr(T[W,V(T)]) &=n^{v(W,T)}\rr^{e(W,T)} \le n^{v(W,Z)}\rr^{e(W,Z)} \prod_{i \ge 1} n^{v_i}\rr^{e_i} \\
    &\le \mu_\rr(W,Z)\cdot \aut(W,Z) \prod_{i \ge 1} \aut(Y_i,Z_i),\end{split}
    \enq
the last inequality holding since, for each $i \ge 1$,
\[
n^{v_i}\rr^{e_i}=\tilde \mu_\rr(Y_i, Z)=\mu_\rr(Y_i,Z_i)\cdot \aut(Y_i,Z_i) \,\le\, \aut(Y_i,Z_i).
\]
(Here $\mu_\rr(Y_i,Z_i)\le 1$
because $[W,Z]$ is $\rr$-leading, with equality possible
since we can have $Y_i=Z_i$.)

\mn

So, as suggested in 
\Cref{sec.prelim}, the main issue for \Cref{lem.tree.hard}
will be bounding
$\aut(Y,Z)$ for $W \sub  Y \subsetneq Z$. 
To this end, we introduce the following key notion.
(Note that, here and below, we speak of a 
 general $(W,Z)$,
not the $(W,Z)$ of \Cref{lem.tree.hard}.)

\begin{mydef}\label{def.sunflower}
A \emph{k-sunflower} in
$J[W,Z]$ is a disjoint collection 
$P_1, \ldots, P_k \sub Z \setminus W$ satisfying

\mn
{\rm (a)} $E(P_i,P_j)=\0$ $\,\,\forall i\neq j$, and

\mn
{\rm (b)} the rooted graphs $J[Z\sm \cup P_j,P_i]$ are all isomorphic (with respect to a fixed ordering of $Z\sm \cup P_j$).
\end{mydef}
\nin
In particular, $N_{J[Z]}(P_i) \setminus P_i$ takes the same value, say $Q$, for all $i$, and the 
$J[Q,P_i]$'s are all isomorphic.

To bound $\aut(W,Z)$, we first show, in 
\Cref{lem.no sunflower}, that the assumptions of \Cref{lem.tree.hard} rule out existence of a 
``large" sunflower.  Our main 
point,
\Cref{lem.sunflower}, then bounds $\aut(W,Z)$
in the absence of such a sunflower.
The derivation of \Cref{lem.tree.hard} is given
at the end of the section.

\begin{lem}\label{lem.no sunflower} If $J[W,Z]$ is 
$\rr$-leading and $J[Z]$ is $(\rr/2)$-sparse, then $J[W,Z]$ contains no $(\log n)$-sunflower.    
\end{lem}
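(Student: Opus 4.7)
\emph{Plan.} I will argue by contradiction: suppose $J[W,Z]$ contains a $k$-sunflower $P_1,\dots,P_k$ with $k=\log n$. Let $R=Z\sm\bigcup_i P_i$ and let $Q:=N(P_1)\cap R$ be the common attachment set guaranteed by the sunflower condition. The strategy is to compare a lower bound on $\EE_{\rr/2}X_T$, where $T:=J[Q\cup\bigcup_i P_i]$, coming from $(\rr/2)$-sparseness of $J[Z]$ (which $T$ inherits), against an upper bound that follows from the large symmetry of $T$ together with the smallness of each petal-extension forced by $\rr$-leading.

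The first step extracts a per-petal bound from leading. Applying $\rr$-leading of $[W,Z]$ to $Y=Z\sm P_1$---a valid intermediate set since the $P_i$ are disjoint and nonempty with $k\ge 2$---and noting that the sunflower condition forces every edge from $P_1$ to $Y$ to land in $Q$, so $\aut(Z\sm P_1,Z)=\aut(Q,P_1)$, yields
\[
\mu\;:=\;n^{v_P}\rr^{e_P}/\aut(Q,P_1)\;<\;1,
\]
where $v_P=|P_1|$ and $e_P$ is the number of $J$-edges incident to $P_1$. A short case analysis shows $Q\ne\0$ (otherwise sparseness of $J[P_1]$ conflicts with $\mu<1$ via the $2^{-e_P}$ gap), so $e_{QP}\ge 1$; combined with \Cref{prop.small claims}(b) this gives $e_P\ge v_P+1$, and hence $\mu_{1/2}:=\mu\cdot 2^{-e_P}<2^{-(v_P+1)}$.

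The second step lower-bounds $\aut(T)$: each petal-permutation $\sigma\in S_k$, combined with independent local automorphisms from $\aut(Q,P_1)$ on each petal, yields a distinct automorphism of $T$ fixing $Q$ pointwise, so $\aut(T)\ge k!\cdot\aut(Q,P_1)^k$. Applying $(\rr/2)$-sparseness of $T$, namely $(n)_{|Q|+kv_P}(\rr/2)^{e(J[Q])+ke_P}\ge\aut(T)$, and dividing through by $\aut(Q,P_1)^k$ (recognizing the grouped factor $\mu_{1/2}^k$),
\[
n^{|Q|}(\rr/2)^{e(J[Q])}\;\ge\;k!/\mu_{1/2}^k\;\ge\;k!\cdot 2^{k(v_P+1)}\;=\;k!\cdot n^{v_P+1}
\]
for $k=\log n$. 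Since trivially $n^{|Q|}(\rr/2)^{e(J[Q])}\le n^{|Q|}$, this already yields a contradiction whenever $|Q|-v_P-1<\log_n(k!)$, i.e., roughly when $|Q|\le v_P+O(\log\log n)$.

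The main obstacle is the remaining regime $|Q|\gg v_P$. My plan is to reduce to $W=Q$ by passing to the rooted subgraph $[Q,Q\cup\bigcup_i P_i]$, which inherits $\rr$-leading (under $Y\mapsto Y\cup(R\sm Q)$ the intermediate sets between $Q$ and $Q\cup\bigcup_i P_i$ correspond bijectively to intermediate sets between $W$ and $Z$ with the same $\mu_\rr$-value), and then either to induct on $|V(J[Z])|$---arguing that large $|Q|$ forces further sub-sunflower structure inside $J[Q]$ itself satisfying the lemma's hypotheses---or to apply \Cref{prop.small claims}(a) inside $[W,Z]$ with a well-chosen intermediate $Y$ to upper-bound $n^{|Q|}(\rr/2)^{e(J[Q])}$ beyond the trivial $n^{|Q|}$. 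This bookkeeping is delicate because leading as stated controls $\mu_\rr(Y,Z)$ only for $Y$ strictly between $W$ and $Z$, whereas the quantity to bound concerns $J$ restricted to $Q$; bridging that gap is where I expect the proof to require the most care.
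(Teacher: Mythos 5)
Your framework—contrasting the $(\rr/2)$-sparseness lower bound for $T=J[Q\cup\bigcup P_i]$ with an upper bound built from $\rr$-leading and the symmetry of the sunflower—is the right one and matches the paper. The execution has a real gap, though, which you yourself flag: you try to bound the per-petal factor via $e_P\ge v_P+1$, which gives $\mu_{1/2}<2^{-(v_P+1)}$ and hence, after raising to the $k$-th power with $k=\log n$, only $n^{v_P+1}$; this cannot beat the $n^{|Q|}$ coming from the core, and the regime $|Q|\gg v_P$ is left open. (Your $e_P\ge v_P+1$ step is also shaky: it invokes \Cref{prop.small claims}(b), which needs $\rr\ge 1/n$, a hypothesis the lemma does not make, and it fails outright when $v_P=1$.)

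The missing observation is much more elementary and goes in a different direction: since $Q=N_{J[Z]}(P)\sm P$, \emph{every} vertex of $Q$ has at least one neighbor in $P$, so $e(Q,P)\ge|Q|$. Hence
\[
\mu_{\rr/2}(Q,P)=\mu_\rr(Q,P)\,2^{-e(Q,P)}<2^{-|Q|},
\]
using only $\mu_\rr(Q,P)=\mu_\rr(Z\sm P,P)<1$ from $\rr$-leading (no degree control needed). Plugging this into the sparseness inequality $1\le\EE_{\rr/2}X_T\le n^{|Q|}\mu_{\rr/2}(Q,P)^k/k!$ gives
\[
1\le \frac{n^{|Q|}}{k!}\,2^{-|Q|\log n}=\frac{1}{k!}\ll 1,
\]
a contradiction with no case split on $|Q|$. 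None of the speculative machinery in your last paragraph (reduction to $W=Q$, induction on $|V(J[Z])|$, or re-bounding $n^{|Q|}(\rr/2)^{e(J[Q])}$ via \Cref{prop.small claims}(a)) is needed once this bound replaces yours.
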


\begin{proof} 
Let $k=\log n$, and suppose for a contradiction that
$J[W,Z]$ contains a $k$-sunflower $\{P_1, \ldots, P_k\}$,
with $Q$ the common value of $N_{J[Z]}(P_i)\sm P_i$.
Set $P_1=P$ (so $[Q,P_i]\cong [Q,P]$ $\forall i$), 
$S=\cup P_i$, 
and $R=J[Q \cup S]$. Then
\beq{eq.no sunflower}
\mu_{\rr/2}(R) \le n^{|Q|}\frac{\mu_{\rr/2}(Q, P)^k}{k!} =\frac{n^{|Q|}}{k!} \left[\mu_\rr(Q,P) 2^{-e(Q,P)}\right]^k.
\enq
But $\mu_\rr(Q, P) =\mu_\rr(Z\sm P,P)<1$
(since $N_J(P)=Q$ and $[W,Z]$ is $\rr$-leading);
so, since $e(Q, P) \geq |Q|$, 
the right-hand side of \eqref{eq.no sunflower} is less than
\[
\frac{n^{|Q|}}{k!}2^{-|Q|\log n} \ll 1,
\]
which 
is impossible since $J[Z]$ is $(\rr/2)$-sparse.
\end{proof}

\begin{lem}\label{lem.sunflower}
Let $k \ge 4 \log n$. If $J[W,Z]$ does not contain a 
$k$-sunflower and 
no vertex of $Z \setminus W$ is isolated in $J[W,Z]$,
then $\aut(W,Z)\le (16k)^{e(W,Z)}$.
\end{lem}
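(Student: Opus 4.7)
I would proceed by induction on $e = e(W,Z)$. The base case $e = 0$ is immediate: the no-isolated-vertex hypothesis forces $Z = W$, hence $\aut(W,Z) = 1$. For the inductive step, write $G := \Aut(W,Z)$, and aim to find an edge $e_0 = \{x,y\} \in E([W,Z])$ whose setwise $G$-orbit has size at most $8k$. Given such an $e_0$, set $W' := W \cup (\{x,y\} \setminus W)$; orbit--stabilizer (with an extra factor of at most $2$ for a possible swap $x \leftrightarrow y$) yields
\[
|G| \;\le\; 2 \cdot 8k \cdot |\Aut(W',Z)| \;=\; 16k \cdot |\Aut(W',Z)|,
\]
while $e(W',Z) \le e-1$ because $e_0$ itself is dropped from the rooted edge set. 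The inductive hypotheses transfer to $[W',Z]$: any $k$-sunflower there is automatically a $k$-sunflower of $[W,Z]$ (the sunflower conditions reference only $J$ and petals $P_i \sub Z \setminus W$, not the root), and no vertex of $Z \setminus W'$ becomes isolated, since its degree in the rooted graph equals $\deg_{J[Z]}(\cdot)$, which is unaffected by enlarging the root. Induction then provides $|\Aut(W',Z)| \le (16k)^{e-1}$ and completes the step.

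Everything therefore hinges on the dichotomy: \emph{if every edge of $[W,Z]$ has setwise $G$-orbit exceeding $8k$, then $[W,Z]$ contains a $k$-sunflower.} Granting large orbits, fix $e_0 = \{x,y\}$ with orbit $\{e_1, \ldots, e_m\}$, $m > 8k$, via $\sigma_i \in G$ with $\sigma_i(e_0) = e_i$. The natural petal candidates are $P_i := \sigma_i(e_0 \setminus W) \sub Z \setminus W$; one must refine to a $k$-element subfamily satisfying (i) pairwise disjoint with no $J$-edges between distinct $P_i$'s, and (ii) a common external $J$-neighborhood in $Z \setminus \bigcup_j P_j$. Condition (i) I would approach greedily: the number of petals sharing a given vertex, or linked by a $J$-edge, is controlled by the transitive $G$-action on orbit edges --- a vertex lying in many petals would itself generate a ``star''-type sunflower, and an edge between $\sigma_i(e_0)$ and $\sigma_j(e_0)$ translates via $\sigma_j^{-1}$ into an edge between $e_0$ and $\sigma_j^{-1}\sigma_i(e_0)$, which is also exploitable --- so a constant fraction of the orbit should survive the thinning.

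\textbf{The main obstacle} is condition (ii). Writing $Q := N_J(e_0) \setminus e_0$, the external neighborhood of $P_i$ is $\sigma_i(Q)$, and two petals agree iff $\sigma_i^{-1}\sigma_j$ lies in the setwise $G$-stabilizer of $Q$. A direct pigeonhole over all $\le 2^n$ possible subsets of $Z$ is far too coarse against an orbit of size only $m = \Omega(\log n)$, and this is precisely where the hypothesis $k \ge 4\log n$ must be engaged. My expectation is that the external neighborhoods, forming a single $G$-orbit, admit a classification into $O(k)^{\text{edges used}}$ types by recursively applying this very lemma to a smaller rooted sub-instance --- for example the rooted graph on $e_0 \cup Q$ rooted at $e_0$, which has strictly fewer edges --- after which $2^{4\log n} = n^4$ is enough to drive a successful pigeonhole. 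Closing this recursion with constants that match the target bound $(16k)^e$, while simultaneously accommodating condition (i), is what I anticipate to be the delicate technical heart of the proof.
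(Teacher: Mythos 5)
Your framework---induction on $e(W,Z)$ via the orbit--stabilizer theorem, dropping a well-chosen edge into the root at each step---is a genuinely different approach from the paper's. The transfer arguments you give for the inductive hypotheses are correct: sunflower petals and vertex degrees in $[W,Z]$ are determined by $J[Z]$ and the petals alone, not by $W$, so enlarging the root only shrinks the class of eligible sunflowers and preserves the non-isolation hypothesis. If you could establish your dichotomy, the induction would close cleanly and give a more modular proof.

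But the dichotomy---\emph{every edge orbit of size $>8k$ forces a $k$-sunflower}---is exactly where the proposal breaks down, and you acknowledge as much. The trouble is not merely pigeonholing external neighborhoods: Definition~\ref{def.sunflower} requires the rooted graphs $J[Z\setminus\bigcup P_j,\,P_i]$ to be \emph{isomorphic} as rooted graphs on the entire complement, which is strictly stronger than sharing a common neighborhood $Q$. An orbit of edges under $\Aut(W,Z)$ gives you petals $\sigma_i(e_0\setminus W)$ whose attachments to $e_0$'s local picture are conjugate, but conjugacy does not give rooted-isomorphism over the \emph{fixed} set $Z\setminus\bigcup P_j$: distinct $\sigma_i$ move the surrounding vertices around, so nothing forces the petals to attach compatibly to a single frozen complement. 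Your thinning idea for condition (i) runs into the same issue (a vertex shared by many petals need not produce a sunflower without a disjointness and isomorphism argument), and your recursive pigeonhole suggestion for (ii) is circular---it invokes the lemma you are proving at a smaller scale without a termination argument or the accounting to keep the constants at $16k$. In short, the proposed key step is a plausible-sounding claim that is neither proved nor, as far as I can see, straightforwardly true.

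The paper's proof sidesteps edge orbits entirely. It decomposes $J[Z\setminus W]$ into connected components and groups them by rooted-isomorphism type; each isomorphism class is \emph{automatically} a sunflower (the crucial observation), so the no-$k$-sunflower hypothesis caps class sizes at $<k$. It then bounds $\aut$ of each component by isolating the set $L$ of high-degree vertices (degree $>\log v$), which must be small (at most $2e/\log v$), bounding $\aut(R[L])\le|L|^{|L|}\le 4^{e_R}$, and repeating the component/isomorphism-class decomposition on $R-L$, where the components now have small maximum degree and the elementary bound $\aut(G)\le v_G\Delta_G^{v_G-1}$ finishes. The hypothesis $k\ge 4\log n$ enters only to guarantee $4\log v_R<k$ in that last step---not through any $n^4$ pigeonhole. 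The structural source of sunflowers (iso-classes of components, twice) is what your proposal is missing.
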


\nin
As noted in Section~\ref{sec.prelim}, 
we believe 
this is true without
the restriction on $k$ (maybe with 16 
replaced by some 
other constant), and that the question is of some 
interest (though it would not
immediately yield \eqref{improve3.4},
since the $\log n$ in Lemma~\ref{lem.no sunflower} 
is unavoidable).

\begin{proof}
Let $T_1, \dots, T_m$ be the 
(connected) components of $J[Z \setminus W]$,
and say $T_i$ and $T_j$ are \emph{equivalent} if 
$J[W,T_i]$ and $ J[W,T_j]$ are isomorphic.
The set of $T_i$'s breaks into equivalence classes
$\T_1,\dots ,\T_{a}$, the members of each $\T_j$
forming a sunflower, say of size
$k_j\,$ ($< k$), and we have
\beq{aut.prod.ub}
\aut(W,Z) \le 
\prod_j k_j! \prod_{T_i \in \T_j} \aut(W,V(T_i))
\le
\prod_j k_j! \prod_{T_i \in \T_j} \aut(T_i).
\enq

\begin{claim}\label{cl.aut.i} For each $i$,
\[ 
\aut(T_i)\le 
\min\{(4k)^{e(T_i)},(2e(T_i))^{2e(T_i)}\}.
\]
\end{claim}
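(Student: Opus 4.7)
\textbf{Plan for Claim~\ref{cl.aut.i}.}

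The bound $\aut(T_i) \leq (2e(T_i))^{2e(T_i)}$ is elementary and does not require the no-sunflower hypothesis. Since $T_i$ is connected, $v(T_i) \leq e(T_i)+1 \leq 2e(T_i)$ whenever $e(T_i) \geq 1$ (the case $e(T_i)=0$ is trivial, $T_i$ being a single vertex with $\aut(T_i)=1$), so $\aut(T_i) \leq v(T_i)! \leq v(T_i)^{v(T_i)} \leq (2e(T_i))^{2e(T_i)}$.

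The substantive bound $\aut(T_i) \leq (4k)^{e(T_i)}$ should be attacked via contrapositive: assuming $\aut(T_i) > (4k)^{e(T_i)}$, I would exhibit a $k$-sunflower in $J[W, V(T_i)]$. This reduction is legitimate because $V(T_i) \subseteq Z \setminus W$ and $T_i$ is a component of $J[Z\setminus W]$, so in $J[Z]$ every vertex of $V(T_i)$ is adjacent only to $V(T_i)$ or to $W$; consequently any $k$-sunflower in $J[W,V(T_i)]$ is automatically a $k$-sunflower in $J[W,Z]$, contradicting the hypothesis of \Cref{lem.sunflower}. To turn the large automorphism count into usable structure, I would use an orbit--stabilizer enumeration: fix a root $r \in V(T_i)$ and a BFS order $v_1=r, v_2, \dots, v_n$, set $G_j := \Aut(T_i)_{v_1,\dots,v_j}$, and write $\aut(T_i) = \prod_j [G_{j-1}:G_j]$, with each index equal to the $G_{j-1}$-orbit size of $v_j$ and bounded by the number of $T_i$-neighbours of the BFS parent of $v_j$. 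The assumption forces some index to exceed $4k$, producing a set $O$ of more than $4k$ vertices of $V(T_i)$ in a single orbit of the subgroup fixing all earlier BFS vertices; in particular these vertices share identical $T_i$-connections to the enumerated ``skeleton''.

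The main obstacle is upgrading this internal $T_i$-symmetry into an honest $k$-sunflower in $J[W, V(T_i)]$: automorphisms of $T_i$ need not preserve $W$-edges, so a large $T_i$-orbit can split into many $W$-neighbourhood classes. I would group $O$ by $N_J(\cdot)\cap W$ and argue that either (a) some class has $\geq k$ vertices, giving a singleton $k$-sunflower directly, or (b) one can assemble multi-vertex petals from equivalent vertex tuples whose joint $W$-edges align across $k$ translates. Case (b) is the technical heart: implementing it cleanly likely requires iterating the BFS/orbit decomposition, peeling off partial sunflowers level by level and tracking $W$-neighbourhood ``types'' so that whenever the accumulated automorphism count threatens to exceed $(4k)^{e(T_i)}$, a genuine $k$-sunflower is produced --- yielding the desired contradiction.
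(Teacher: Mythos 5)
The elementary second bound is fine and matches the paper (which uses $\aut(T_i)\le v_{T_i}^{v_{T_i}}\le(2e_{T_i})^{2e_{T_i}}$). Your route to the first bound, however, is genuinely different from the paper's: the paper gives a \emph{direct} structural decomposition, setting $L=\{v\in V(T_i): d(v)>\log v_{T_i}\}$, bounding $\aut(T_i[L])\le |L|^{|L|}\le 4^{e_{T_i}}$ via $|L|<2e_{T_i}/\log v_{T_i}$, grouping the components of $T_i-L$ into sunflower classes of size $<k$ (here is where the no-$k$-sunflower hypothesis enters), and finishing with a degree bound $\aut(G)\le v_G\gD_G^{v_G-1}$ on the small components, whose maximum degree is at most $\log v_{T_i}$. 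You instead propose the contrapositive via BFS and orbit--stabilizer.

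The gap in your proposal is exactly the one you flag as ``the technical heart,'' and I do not think it closes the way you sketch. You correctly observe that if all non-root orbit indices are $\le 4k$ then $\aut(T_i)\le v_{T_i}(4k)^{v_{T_i}-1}$ (so modulo the root factor you would be done); hence some orbit $O$ of $G_{j-1}$ has size $>4k$. But upgrading $O$ to a $k$-sunflower in $J[W,Z]$ fails even in your ``easy'' case (a). Having the same $W$-neighborhood is not enough: Definition~\ref{def.sunflower}(b) requires the rooted graphs $J[Z\sm\cup P_j, P_i]$ to be isomorphic, i.e.\ the petals must attach identically to \emph{all} of $Z\sm\cup P_j$, and Definition~\ref{def.sunflower}(a) requires no edges between petals. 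A $G_{j-1}$-orbit only guarantees identical attachment to the BFS-skeleton $\{v_1,\dots,v_{j-1}\}$; vertices of $O$ may be pairwise adjacent, and may attach differently to not-yet-processed vertices of $T_i$. Your case (b) would need to resolve all of this, and you only describe an intention (``likely requires iterating \dots peeling off partial sunflowers level by level'') rather than an argument. As written, there is no proof that the iteration terminates with a genuine sunflower before the automorphism budget is spent, so the claim is not established. (A smaller point: the root's orbit factor $\le v_{T_i}$ also needs to be absorbed into $(4k)^{e_{T_i}}$, which the paper handles but your sketch does not address.)
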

\nin
(The second bound is trivial but helpful below.)

\mn

Before proving Claim~\ref{cl.aut.i} we show that it gives 
Lemma~\ref{lem.sunflower}.
Denote by $\Tjs$ the common isomorphism type 
of the $T_i$'s in $\T_j$, and set 
$\vjs =v(\Tjs)$ and $\ejs =e(\Tjs)$.
The right-hand side of \eqref{aut.prod.ub} is then
\[
\prod_j k_j! (\aut(\Tjs))^{k_j}
~\leq ~
\prod_j [k_j\cdot \aut(\Tjs)]^{k_j},
\]
and we want to show this is at most 
$(16k)^{e(W,Z)} \ge \prod_j(16k)^{k_j\ejs}$. 
So it is enough to show
\[
k_j\cdot \aut(\Tjs)\leq (16k)^{\ejs},
\]
which in view of Claim~\ref{cl.aut.i}
will follow from
\beq{L5.3to.show}
k_j^{1/\ejs}\cdot \min\{4k,(2\ejs)^2\}
\leq 16k.
\enq

Here we have plenty of room.
If $k_j \leq 4^{\ejs}$ then the bound is immediate
(note $e_j^*\ne 0$, since the $T_i$'s are not
singletons);
and otherwise
$k \geq k_j\geq 4^{\ejs}$, and
\eqref{L5.3to.show} follows from the generous
\[
16k^{1-1/\ejs} \ge 16\cdot 4^{\ejs-1}\geq (2\ejs)^2.
\qedhere\]
\end{proof}

\begin{proof} [Proof of Claim~\ref{cl.aut.i}]

To slightly simplify notation, we now
write $\ST$ for $T_i$.
Since $v_\ST \ge 2$ (not actually needed here since 
the assertion is trivial when $v_R=1$)
the claim's second bound follows from 
$\aut(R)\leq v_R^{v_R}$ and $v_R\le 2e_R$.
So we are interested in the first bound,
for which we will need the trivial observation that,
for any connected $G$,
\beq{autG}
\aut(G) \leq v_G \gD_G^{v_G-1}
\enq
(where $\gD$ is maximum degree).

Set 
$L = \{v \in V(\ST): d_\ST(v) > \log v_\ST\}$.
We first note that if
$L=\0$ then the desired $\aut(R) < (4k)^{e_R}$ is 
easy:  we have $\gD_R\leq \log v_R$ and $e_R\geq v_R-1$
(since $R$ is connected), so by \eqref{autG},
\[
\aut(R)\leq v_R(\log v_R)^{v_R-1} \leq v_R(\log v_R)^{e_R} 
< v_R k^{e_R} \leq (4k)^{e_R}
\]
(where the last inequality uses $x\leq 4^{x-1}$ for $x\geq 1$).
So we assume $L\neq \0$.

Let 
the components of $\ST-L$ be $S_1,\ldots, S_t$,
and partition these into
equivalence classes according to the isomorphism type of 
$[W \cup L, S_i]$;
say the classes are $\sss_1,\dots ,\sss_{b}$,
noting that 
$\ell_j:=|\sss_j|< k$
since $\sss_j$ is a sunflower.
As earlier we use $\Sjs$ for the isomorphism type of the $S_i$'s 
in $\sss_j$
and now set $\vjs =v(\Sjs)$ and $\ejs =e(\Sjs)$.

\mn

Since $|L| < 2e_\ST/\log v_\ST$, we have
\[
\aut(\ST[L])\le |L|! \le |L|^{|L|} \leq 
v_\ST^{2e_\ST/\log v_\ST} = 4^{e_\ST},
\]
whence
\beq{autRL}
\aut(\ST)\le \aut(\ST[L])\prod_j\ell_j!\prod_{S_i \in \sss_j}\aut(S_i)
\le 4^{e_\ST}
\prod_j\left[\ell_j\cdot\aut(\Sjs)\right]^{\ell_j}.
\enq
So we should 
show that the right-hand side of \eqref{autRL} is at most
$(4k)^{e_R}$,
for which,
since $e_R=\sum \ell_j\ejs$, it is enough to show
$\ell_j\cdot\aut(\Sjs) \leq k^{\ejs}$  for each $ j$.
In fact, noting that $\ejs\geq \vjs$ 
(since each $S_i$ is connected with $E(S_i,L)\neq \0$),
we show the stronger
\[
\ell_j\cdot\aut(\Sjs) \leq k^{\vjs}
\,\,\,\,\forall j.
\]

As was true for \eqref{L5.3to.show}, the argument here
depends on $\ell_j$.
If $\ell_j \leq 2^{\vjs}$, then \eqref{autG},
with $\gD_{\Sjs} \le \log v_R$, gives
\[
\ell_j\cdot\aut(\Sjs) 
\,\le 2^{\vjs} \vjs(\log v_\ST)^{\vjs-1} 
< (4 \log v_\ST)^{\vjs} < k^{\vjs}.
\]
And if
$\ell_j> 2^{\vjs}$, we have the even easier
\[
\ell_j\cdot\aut(\Sjs) 
\le 
\ell_j (\vjs)^{\vjs} \leq \ell_j^{\vjs} < k^{\vjs},
\]
where the second inequality follows from 
$\ell_j> 2^{\vjs}$ and $2^{\vjs-1}\geq \vjs$.
\end{proof}

Finally, to complete the proof of \Cref{lem.tree.hard}, we 
observe that under its assumptions,
Lemmas~\ref{lem.no sunflower} and \ref{lem.sunflower} imply 
the following bound, which we may apply to each 
$\aut(\cdot)$ factor on the right-hand side of \eqref{mu.tilde.reform}.

\begin{cor}\label{lem.aut.ub} If $J[W,Z]$ is 
$\rr$-leading and $J[Z]$ is $(\rr/2)$-sparse, 
then
\[
\aut(W,Z)\le 
\min\{v(W,Z),16\log n\}^{e(W,Z)}.
\]
\end{cor}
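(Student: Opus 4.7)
The plan is to derive Corollary \ref{lem.aut.ub} as essentially a direct composition of the two preceding lemmas, with one brief side check and a short direct argument for the polynomial half of the $\min$. Conceptually there is very little to do; the main work is to verify that both halves of the $\min$ are separately achievable.

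First I would verify that no vertex of $Z\setminus W$ is isolated in $J[W,Z]$, which is the remaining hypothesis required by Lemma \ref{lem.sunflower}. When $|Z\setminus W|\ge 2$, any such isolated $v$ would yield $Y:=Z\setminus\{v\}$ with $W\subsetneq Y\subsetneq Z$ and $\mu_\rr(Y,Z)=n\ge 1$, contradicting $\rr$-leadingness; the edge case $|Z\setminus W|\le 1$ makes $\aut(W,Z)=1$ and the bound trivial. With this in hand, Lemma \ref{lem.no sunflower} gives no $(\log n)$-sunflower in $J[W,Z]$, hence no $k$-sunflower for any $k\ge \log n$ (restrict to any sub-collection of petals). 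Applying Lemma \ref{lem.sunflower} with $k$ a suitable multiple of $\log n$ then yields the $(16\log n)^{e(W,Z)}$ half of the $\min$.

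For the $v(W,Z)^{e(W,Z)}$ half of the $\min$, I would argue directly. Writing $A:=Z\setminus W$ and partitioning $A=I\sqcup R$ where $I$ is the set of vertices isolated in $J[A]$, each $a\in I$ satisfies $\deg_W(a)\ge 1$ (by the no-isolated-in-$[W,Z]$ observation), so $\sum_c|I_c|=|I|\le e_{W,A}$, where $I_c$ are the $W$-color classes of $I$. Since no $J[A]$-edge crosses from $I$ to $R$, any $\phi\in \aut(W,Z)$ decomposes independently as a color-preserving permutation of $I$ (at most $\prod_c|I_c|!\le v(W,Z)^{|I|}\le v(W,Z)^{e_{W,A}}$ options) together with a color-preserving automorphism of $J[R]$. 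The latter is bounded by $v(W,Z)^{e(J[R])}$ via the standard estimate $\aut(G)\le v_G^{e_G}$ for graphs without isolated vertices (which follows from $\aut(T)\le v_T^{v_T-1}\le v_T^{e_T}$ on connected components together with an $m_T!$ overhead absorbed by the slack $v_T\le v$). Multiplying gives $\aut(W,Z)\le v(W,Z)^{e(J[R])+e_{W,A}}=v(W,Z)^{e(W,Z)}$.

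The main obstacle, such as it is, is matching constants between the sunflower-size guaranteed by Lemma \ref{lem.no sunflower} (namely $\log n$) and the threshold $k\ge 4\log n$ required by Lemma \ref{lem.sunflower}; these can be reconciled by noting that the proof of Lemma \ref{lem.no sunflower} comfortably gives $\ll 1$ rather than $<1$, so the sunflower bound has room to absorb the factor of $4$, or equivalently by tolerating a slightly larger absolute constant in place of $16$ with no effect on downstream applications.
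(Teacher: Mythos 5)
Your proposal is correct, and the overall architecture is the same as the paper's (invoke Lemma~\ref{lem.no sunflower} to rule out large sunflowers, apply Lemma~\ref{lem.sunflower} for the $\log n$ branch of the $\min$, and give a direct elementary argument for the $v(W,Z)$ branch). Two points of genuine difference are worth recording. First, for the polynomial branch the paper passes to $G:=J[Z\setminus W]$ and claims the stronger $\aut(G)\le v_G^{e_G}$, reducing to the case where all components of $G$ are isomorphic trees and then splitting into a matching case and a ``degree-$1$-vertices-determine-everything'' case. That brief sketch silently presumes $G$ has no isolated vertices, which does \emph{not} follow from the corollary's hypotheses: a vertex of $Z\setminus W$ may well have all (at least two) of its $[W,Z]$-edges into $W$, and then $G$ has isolated vertices, $\aut(G)>v_G^{e_G}$, and the paper's stated reduction (``otherwise $v\le e_G$'') fails. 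Your decomposition $A=I\sqcup R$, handling the $J[A]$-isolates $I$ by $W$-color classes (with $|I|\le e(W,A)$ coming precisely from the no-isolates-in-$[W,Z]$ hypothesis) and applying $\aut\le v^e$ only to $J[R]$, is cleaner and actually repairs this gap; it establishes the bound that the paper needs ($\aut(W,Z)\le v(W,Z)^{e(W,Z)}$) without the false intermediate claim. Second, your observation about the mismatch between the $(\log n)$-sunflower exclusion in Lemma~\ref{lem.no sunflower} and the hypothesis $k\ge 4\log n$ in Lemma~\ref{lem.sunflower} is also correct: the paper's literal composition yields $(64\log n)^{e(W,Z)}$, not $(16\log n)^{e(W,Z)}$, and your proposed fixes (either strengthening Lemma~\ref{lem.no sunflower} to exclude $(4\log n)$-sunflowers, which its proof gives with room to spare, or tolerating a larger constant, which the slack in $\CC$ in \Cref{sec.MT} absorbs) are both sound. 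So this is a correct proof by a route that differs in detail, and arguably tightens the paper's argument at both points where the paper is terse.
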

\nin
(When $W=Z$, the bound is $0^0=1$.)

\begin{proof}
We may assume  
$v(W,Z) \ge 2$ (or the assertion is trivial), and
note that $r$-leading then implies that $Z\sm W$ 
contains no isolated vertices.
The first bound
is thus contained in the following more general
statement, which mostly ignores the corollary's hypotheses.
\begin{prop}
If $J[W,Z]$ has no isolated vertices in $Z\sm W$, 
then $\aut(W,Z)\leq v(W,Z)^{e(W,Z)}$.
\end{prop}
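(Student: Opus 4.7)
The plan is to proceed by induction on $m := v(W,Z)$, with a case split according to whether $[W,Z]$ has any edge joining $W$ to $Z \sm W$. Write $k := e(W,Z)$, and for $v \in Z \sm W$ set $d_W(v) := |N_J(v) \cap W|$. The base case $m=0$ is immediate.

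For the inductive step ($m \ge 1$), suppose first that some $v \in Z \sm W$ has $d_W(v) \ge 1$. Since $\aut(W,Z)$ fixes $W$ pointwise it acts on $Z \sm W$, so orbit--stabilizer gives $\aut(W,Z) \le m\cdot\aut(W\cup\{v\},Z)$. The rooted graph $[W\cup\{v\},Z]$ still has no isolated vertex in $Z \sm (W\cup\{v\})$ (edges incident to any $u \notin W\cup\{v\}$ remain), and a short bookkeeping gives $e(W\cup\{v\},Z) = k - d_W(v)$ since the edges lost are precisely the $d_W(v)$ edges from $v$ to $W$. The induction then gives
\[
\aut(W,Z) \le m (m-1)^{k - d_W(v)} \le m^{k - d_W(v) + 1} \le m^k,
\]
the last step using $d_W(v) \ge 1$.

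In the remaining case, no vertex of $Z \sm W$ is adjacent to $W$, so every edge of $[W,Z]$ lies within $Z \sm W$ and $\aut(W,Z)$ coincides with the unrooted automorphism group of $J[Z \sm W]$ (which inherits the no-isolated-vertex hypothesis). The claim then reduces to the unrooted statement $\aut(G) \le v_G^{e_G}$ for any $G$ without isolated vertices, which I would prove by decomposing $G$ into connected components and grouping them into isomorphism classes $\Tjs$ of multiplicity $\ell_j$ with parameters $\vjs, \ejs$. For each $\Tjs$, the crude bound $\aut(\Tjs) \le \vjs! \le \vjs^{\vjs - 1} \le \vjs^{\ejs}$ holds (the middle inequality is elementary for $\vjs \ge 2$, and $\ejs \ge \vjs - 1$ since $\Tjs$ is connected). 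Combined with the wreath-product estimate $\aut(G) \le \prod_j \ell_j!\, \aut(\Tjs)^{\ell_j}$ and the bounds $\ell_j^{\ell_j} \le (v_G/\vjs)^{\ell_j \ejs}$ (using $\ell_j \vjs \le v_G$ and $\ejs \ge 1$), this yields $\aut(G) \le v_G^{e_G}$.

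The one subtle point — and the reason the case split is needed — is that the orbit--stabilizer induction really does fail when every $v \in Z \sm W$ has $d_W(v) = 0$: the inequality $(m-1)^k \le m^{k-1}$ that one would then need is false as soon as $k$ exceeds roughly $m$. So the unrooted lemma has to be established independently, but the crude factorial bound per component is already sharp enough because each connected component satisfies $\ejs \ge \vjs - 1$.
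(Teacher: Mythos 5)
Your proof is correct, and it takes a genuinely different (and arguably more careful) route than the paper's. The paper's brief argument immediately passes to $G := J[Z\sm W]$, notes $\aut(W,Z)\le \aut(G)$, and then claims the ``stronger'' $\aut(G)\le v^{e_G}$. But that bound requires $G$ itself to be isolated-vertex-free, which is not guaranteed by the hypothesis: a vertex of $Z\sm W$ whose only edges go into $W$ is non-isolated in $J[W,Z]$ but \emph{is} isolated in $J[Z\sm W]$, in which case $\aut(G)\le v^{e_G}$ can fail. Your two-case induction handles exactly this: Case~1 peels off any vertex $v$ adjacent to $W$ via orbit--stabilizer, with the drop $e(W\cup\{v\},Z)=k-d_W(v)$ and the assumption $d_W(v)\ge 1$ paying for the orbit factor $m$; Case~2 is reached only when $J[Z\sm W]$ genuinely has no isolated vertices, at which point the unrooted bound applies legitimately. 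Your proof of the unrooted bound (the per-class estimate $\aut(\Tjs)\le \vjs!\le \vjs^{\vjs-1}\le\vjs^{\ejs}$ together with $\ell_j^{\ell_j}\le (v_G/\vjs)^{\ell_j\ejs}$) is also a bit more direct than the paper's reduction to the all-components-isomorphic-forest case followed by a matching/non-matching split, but the two are morally the same computation. The one minor thing worth noting explicitly in your write-up is that the stabilizer of $v$ in $\aut(W,Z)$ equals $\aut(W\cup\{v\},Z)$ --- both conditions amount to preserving the same edge set once $v$ and $W$ are fixed pointwise, since the edges from $v$ to $W$ are individually fixed --- but this is easy and your use of it is correct.
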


\begin{subproof}[Proof \emph{(briefly).}] 
We work in $G:=J[Z\sm W]$, with $x$ always in $V(G)$,
and use $d$ for degree in $G$, and $v$ for $v_G$ ($=v(W,Z)$). 

We will show the stronger
\beq{aut.eas}\aut(G) \leq v^{e_G}.\enq 
It suffices to prove \eqref{aut.eas} 
assuming the  components of $G$ are all isomorphic,
with the common isomorphism type a tree 
(since otherwise $v \leq e_G$ and \eqref{aut.eas} follows from 
the trivial $\aut(G) \leq v^v$).

If $G$ is a matching, then $\aut(G) = (v/2)!2^{v/2} < v^{e_G}$. 
Otherwise, $L:=\{x: d(x) = 1\}$ is independent,
so $|L|\le e_G$; which again gives \eqref{aut.eas}, since any 
$\vp\in \Aut(G)$ is determined by $\vp|_L$, and the number of possibilities for $\vp|_L$ is at most
$v^{|L|}$.
\end{subproof}

For the corollary's second bound we invoke
Lemma~\ref{lem.no sunflower} to say
$J[W,Z]$ contains no $(\log n)$-sunflower,
and apply
Lemma~\ref{lem.sunflower}. 
\end{proof}

\section{Proof of \Cref{MT} for ``small" $q$}\label{sec.MT.sparse}

Let $p = \gb/n$ and $q =\alpha p =:\gc/n<1/(3n)$, 
with $\ga\in (0,1)$ to be specified.
Recalling that we assume $F$ is connected
(as justified by Proposition~\ref{Pconn}), we first observe that we  may in fact assume $F$ is a tree; this just 
requires $q<1/n$:
for a cycle $C$ of length $t$,
\[
\EE_qX_C < (nq)^t/(2t) <1,
\]
so $C$ cannot be contained in a $q$-sparse $H$.

Set $v_F=m~(=e_F+1)$, noting that 
\[
\EE_pX_F=(n)_mp^{m-1}/\aut(F)=(n)_m(q/\ga)^{m-1}/\aut(F).
\]
Write $\nu(G,J)$ for the largest size of a 
vertex-disjoint collection of copies of $J$ in a graph $G$,
and notice that
\beq{NHF}
\begin{split}N(H,F)&=\sum_{t \ge m} \sum\{N(T,F):\text{$T$ is a $t$-vertex component of $H$}\}\\
&\le \sum_{t \ge m}\sum_{v_T=t}\nu(H,T)N(T,F),\end{split}
\enq
where in the second line
$T$ runs over (isomorphism types of) trees.
\begin{claim}\label{Cnu}
For any $T$,
$
\,\,\,\nu(H,T) \le e\cdot \EE_qX_T.
$
\end{claim}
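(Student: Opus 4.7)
The plan is to apply the $q$-sparseness hypothesis directly to a maximum packing. Let $\nu=\nu(H,T)$ and let $R\sub H$ be the union of $\nu$ vertex-disjoint copies of $T$. Since $R\sub H$ and $H$ is $q$-sparse, $\EE_q X_R\ge 1$. The point is that $R$ consists of $\nu$ disjoint isomorphic pieces, so its automorphism group contains the wreath product $S_\nu \wr \Aut(T)$; in particular $\aut(R)\ge \nu!\,\aut(T)^\nu$ (with equality when $T$ is connected, which is our situation since $T$ is a tree). Writing $\EE_q X_R = (n)_{\nu v_T}\,q^{\nu e_T}/\aut(R)$ and plugging in the lower bound on $\aut(R)$ then gives
$$1 \;\le\; \frac{(n)_{\nu v_T}\,q^{\nu e_T}}{\nu!\,\aut(T)^\nu}.$$

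From here the plan is pure algebra, aimed at converting the right-hand side into a $\nu$-th power of something involving $\EE_q X_T = (n)_{v_T}q^{e_T}/\aut(T)$. Grouping factors as
$$(n)_{\nu v_T} \;=\; \prod_{j=0}^{\nu-1}(n-jv_T)_{v_T}$$
and observing that each factor is at most $(n)_{v_T}$ yields $(n)_{\nu v_T}\le (n)_{v_T}^\nu$. Combining this with the standard Stirling bound $\nu!\ge (\nu/e)^\nu$, the displayed inequality becomes
$$1 \;\le\; \left(\frac{e\,(n)_{v_T}\,q^{e_T}}{\nu\,\aut(T)}\right)^{\!\nu} \;=\; \left(\frac{e\,\EE_q X_T}{\nu}\right)^{\!\nu},$$
which rearranges to $\nu\le e\,\EE_q X_T$, as claimed.

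I don't expect any real obstacle: the whole content of the claim is the wreath-product lower bound on $\aut(R)$, which is what makes $\EE_q X_R$ small enough that the $q$-sparseness of $H$ forces $\nu$ to be small. Both of the deterministic estimates used along the way — $(n)_{\nu v_T}\le (n)_{v_T}^\nu$ and $\nu!\ge(\nu/e)^\nu$ — are essentially tight for the quantities that appear, and indeed the constant $e$ in the claim comes out of Stirling exactly (with no slack left over to absorb anything else, which is consistent with the fact that the claim is stated with precisely this constant).
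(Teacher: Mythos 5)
Your proof is correct and follows the paper's argument line-for-line: both use $q$-sparseness to get $\EE_qX_R\ge 1$ for the disjoint union $R$, the identity $\aut(R)=\nu!\,\aut(T)^\nu$ (your wreath-product lower bound collapses to equality since $T$ is a tree, as you note), the factorization $(n)_{\nu v_T}\le\bigl((n)_{v_T}\bigr)^\nu$, and $\nu!\ge(\nu/e)^\nu$. No meaningful difference.
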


\begin{proof}
Let $R$ consist of $\nu=\nu(H,T)$ vertex-disjoint copies of $T$ in $H$, where $v_T=t$. Then, since $H$ is $q$-sparse,
\[
1\le \EE_q X_R =\frac{(n)_{v_R} q^{e_R}}{\aut(R)}=\frac{(n)_{\nu t} q^{\nu(t-1)}}{(\aut(T))^\nu\cdot \nu!} \le \frac{(\EE_qX_T)^\nu}{\nu!} \le \left(\frac{e\cdot \EE_q X_T}{\nu}\right)^\nu
\]
(and the claim follows).\end{proof}

Inserting Claim~\ref{Cnu} in \eqref{NHF}, we have
\[
N(H,F)<e\sum_{t \ge m} \sum_{v_T=t} \EE_qX_T\cdot N(T,F),
\]
in which, crucially, the inner sum is
\beq{sumvT}
\EE|\{(T,F):v_T=t, F \sub T \sub G_{n,q}\}|,
\enq
where (here and below) we
use $T$ and $F$ for \emph{copies of} $T$ and $F$.

For the expectation in \eqref{sumvT}
we need the number
of $(T,F)$'s in $K_n$.  This is 
$N(K_n,F)= (n)_m/\aut(F)$ times the number of ways to extend
a given $F$ to some $T$ of size $t$;
and the latter number is 
${n-m \choose t-m}$ times the number of labeled, 
$m$-component, rooted forests on $[t]$ 
with specified roots, which is known to be 
$mt^{t-m-1}$ (see \cite{takacs1990cayley}). 
Thus \eqref{sumvT} is 
\[\begin{split}
(n)_m/\aut(F) \cdot {n-m \choose t-m}mt^{t-m-1}\cdot q^{t-1}
&< \frac{n^m}{\aut(F)} \left(\frac{en}{t-m}\right)^{t-m}t^{t-m} q^{t-1}\\
&\le en(e\gc)^{t-1}/\aut(F),\end{split}\]
where the second inequality uses $\left(\frac{t}{t-m}\right)^{t-m} \le e^m$.
So for small enough $\ga$,
\begin{eqnarray*}
N(H,F) &<&e^2\sum_{t \ge m} n(e\gc)^{t-1}/\aut(F) 
\,\,=\,\,
e^2n \cdot (e\gc)^{m-1}/[(1-e\gc)\aut(F)]\\
&<& (n)_m(\gb/n)^{m-1}/\aut(F) 
\,\,=\,\,\EE_pX_F.
\end{eqnarray*}
(We need roughly $(\gc/\gb=$) $\ga< e^{-2}$ to allow for the 
$e^m$ in the first line and the possibility 
that $(n)_m$ is like $(n/e)^m$,
while $\gc<1/3$ makes the $(1-e\gc)$ in the first line
irrelevant.) 
\qed

\section*{Acknowledgments}
QD was supported by NSF Grants DMS-1954035 and DMS-1928930. JK was supported by NSF Grants DMS-1954035 and DMS-2452069.  
JP was supported by NSF Grant DMS-2324978, NSF CAREER Grant DMS-2443706 and a Sloan Fellowship.

\bibliographystyle{plain}
\bibliography{bibliography}

\end{document}